\newlength{\defbaselineskip}
\newcommand{\setlinespacing}[1]%
           {\setlength{\baselineskip}{#1 \defbaselineskip}}
\theoremstyle{plain}
\newtheorem{thm}{Theorem}[section]
\newtheorem{cor}[thm]{Corollary}
\newtheorem{lem}[thm]{Lemma}
\newtheorem{prop}[thm]{Proposition}
\newtheorem{exam}[thm]{Example}
\theoremstyle{definition}
\newtheorem{defn}{Definition}[section]
\newtheorem{rmk}{Remark}[section]
\newcommand{\eps}{\varepsilon}
\newcommand{\cH}{\mathcal{H}}
\newcommand{\cL}{\mathcal{L}}
\newcommand{\cA}{\mathcal{A}}
\newcommand{\cS}{\mathcal{S}}
\newcommand{\bP}{\mathbb{P}}
\newcommand{\bR}{\mathbb{R}}
\newcommand{\bN}{\mathbb{N}}
\newcommand{\bV}{\mathbb{V}}
\newcommand{\bL}{\mathbb{L}}
\newcommand{\sF}{\mathscr{F}}
\newcommand{\sP}{\mathscr{P}}
\makeatletter\@addtoreset{equation}{section} \makeatother
\begin{document}

\title{H\"ormander-Type Theorem for It\^o Processes and Related Backward SPDEs
}

\author{Jinniao Qiu\footnotemark[1]  }
\footnotetext[1]{Department of Mathematics, Humboldt-Universit\"at zu Berlin, Unter den Linden 6, 10099 Berlin, Germany. \textit{E-mail}: \texttt{qiujinn@gmail.com}. Financial support from the chair Applied Financial Mathematics  is gratefully acknowledged.}

%
%

\maketitle

\begin{abstract}
A H\"ormander-type theorem is established for It\^o processes and related backward stochastic partial differential equations (BSPDEs).   A short self-contained proof is also provided for the $L^2$-theory of linear, possibly degenerate BSPDEs, in which new gradient estimates are obtained.
\end{abstract}

{\bf Mathematics Subject Classification (2010):} 60H10, 60H15, 35S10

{\bf Keywords:} H\"ormander theorem, It\^o process, backward stochastic partial differential equation, non-Markov


\section{Introduction}

Let $(\Omega,\bar{\sF},\{\bar{\sF}_t\}_{t \geq 0},\bP)$ be a complete filtered probability space, on which two independent $d_1$-dimensional Weiner processes $W=(W_t)_{t\geq 0}$ and $B=(B_t)_{t\geq 0}$ are well defined.  The filtration generated by $W$, together with all $\bP$ null sets, is denoted by $\{\sF_t\}_{t\geq 0}$. The $\sigma$-algebra of the predictable sets on $\Omega\times[0,+\infty)$ associated with $\{\sF_t\}_{t\geq 0}$ is denoted by $\sP$, and $\sF:=\cup_{t\geq 0}\sF_t$.

An It\^o process (see \cite{Stroock-Varadhan-Multi-dim-diffun79}) starting from time $s $ and position $x$ is of the form
\begin{align}
X_t^{s,x}=x+\int_s^tb(r,X_r^{s,x})\,dr+\int_s^t\sigma^k(r,X_r^{s,x})\,dB^k_r
+
\int_s^t\theta^k(r,X_r^{s,x})\,dW^k_r, \,\,0\leq s\leq t.
\end{align}
Here and throughout this paper, the summation over repeated indices is enforced by convention unless stated otherwise. Fix $T\in(0,\infty)$ and define
\begin{align}
u(t,x)=E_{\bar{\sF}_t}\left[
\int_t^Tf(r,X_r^{t,x})\,dr
+G(X_T^{t,x})
\right],\quad (t,x)\in[0,T]\times\bR^d. \label{eq-conditn-exp}
\end{align}
For the sake of convenience, we assume that 

\medskip
$(\cA0)$ $b$, $\sigma$, $\theta$ and $f$ are $\sP\times \mathcal{B}(\bR^d)$-measurable and $G$ is $\sF_T\times\mathcal{B}(\bR^d)$-measurable. 
\rm\medskip

Under certain conditions (see Proposition \ref{prop-repsn} and Remark \ref{rmk-measurablity}), the random field $u$ turns out to be $\sP\times \mathcal{B}(\bR^d)$-measurable and together with some \textit{endogenous} random field $v$, it satisfies the following BSPDE
\begin{equation}\label{BSPDE}
  \left\{\begin{array}{l}
  \begin{split}
  -du(t,x)=\,&\displaystyle \left[ \frac{1}{2}(L_k^2+M_k^2)u(t,x)+M_kv^k(t,x)
         +\tilde{b}^j(t,x)D_ju(t,x)+f(t,x)
                \right]\, dt\\ &\displaystyle
           -v^{r}(t,x)\, dW_{t}^{r}, \quad
                     (t,x)\in [0,T]\times \bR^d;\\
    u(T,x)=\, &G(x), \quad x\in\bR^d,
    \end{split}
  \end{array}\right.
\end{equation}
where it is written in the H\"ormander form, $D=(D_1,\dots,D_d)$ is the gradient operator, $L_k=\sigma^{jk}D_j$, $M_k=\theta^{jk}D_j$, for $k=1,\dots,d_1$, and $\tilde{b}^j=b^j-\frac{1}{2}\left(\sigma^{ik}D_i\sigma^{jk}+\theta^{ik}D_i\theta^{jk}\right)$, for $j=1,\dots,d$. BSPDE like \eqref{BSPDE} is said to be degenerate when it fails to satisfy the following super-parabolicity: There exists $\lambda\in(0,\infty)$  such that
   \begin{align*}
       \sigma^{ik}\sigma^{jk}(t,x)\xi^i\xi^j\geq \lambda |\xi|^2\quad \text{a.s.,} \,\,\forall\, (t,x,\xi)\in [0,T]\times\bR^d\times\bR^d.
   \end{align*}

Borrowing notions from the optimal stochastic control theory, we say the \textit{framework} is Markovian if and only if all the coefficients $b$, $\sigma$, $\theta$, $f$ and $G$ are \textit{deterministic} functions. In the Markovian case, $X$ is a diffusion (\textit{Markovian}) process,  $u$ is deterministic, $v\equiv 0$, and BSPDE \eqref{BSPDE} turns out to be a classical parabolic PDE. In H\"ormander's seminal work \cite{hormander-HypoEllip1967}, it is proved with the analytical method that given smooth coefficients $b$, $\sigma$, $\theta$ and $f$, under the hypo-ellipticity condition allowing for degenerateness (like condition $(\cH)$ below), $u$ is smooth on $[0,T)\times \bR^d$, even when the terminal value $G$ is just a generalized (irregular) function. H\"ormander's theorem shows in fact the smoothness of transition probabilities of hypo-elliptic diffusions, for which the probabilistic approach was formulated on the basis of Malliavin calculus (see \cite{Malliavin-1978}), and along this line, see \cite{CassFriz-2010,Mattingly-Pardoux-2006} and references therein for the generalizations.  

In this paper, we are concerned with a H\"ormander-type theorem for It\^o processes which allow for random, possibly degenerate coefficients and go beyond the scope of Markovian framework and thus of diffusion processes. In fact, for It\^o processes, the smoothing property depends not only on the (hypo-)ellipticity of the diffusion coefficients but also on the extent to which the \textit{framework} is \textit{Markovian}. In other words, not only the degenerateness but also the randomness of coefficients may damage the smoothing property.  Let us consider the following example.
\begin{exam}\label{exam}
Let $d=d_1=1$, $\sigma\equiv 0$, $f\equiv0$, $\theta\equiv 1$, $b(t,\omega)=\bar{b}(t,H_t(\omega))$  and $G(x)=U(x-H_T)M_T$ with $\bar b$ and $U$ being deterministic functions, $H_t= X^{0,\eta}_t$ and $M_t=\exp\{\alpha W_t-\frac{\alpha^2t}{2}\}$ for $t\in[0,T]$, $\eta,\alpha\in \bR$. (In the field of mathematical finance, $X$ can be seen as a wealth process, the terminal value $G(x)=U(x-H_T)M_T$ is the utility from terminal wealth which is subject to the delivery of liability $H_T$, and $M_T$ denotes the transformation of probability measures.) 

It is easy to check that $u(t,x):=U(x-H_t)M_t$ along with $v(t,x):=\alpha U(x-H_t)M_t-U'(x-H_t)M_t$ solves BSPDE \eqref{BSPDE}. Moreover, we see that $u$ does not have more spacial regularity than its terminal value $G$. Taking a close look at the non-Markovian framework, we consider the two particular cases: \medskip \\ 
 (i) when $\eta=0$ and $b\equiv 0$, $X_t^{s,x}=x+W_t-W_s$ is Markovian and the framework is not Markovian due to the randomness of $G(x)=U(x-W_T)M_T$; \medskip \\
 (ii) when $\alpha=0$ and $H$ is chosen to be the Brownian bridge with $H_T=0$, then $X$ is equipped with a random drift and thus is not a Markov process while the terminal value $G(x)=U(x)$ is deterministic. 
\end{exam}

In view of assumption $(\cA 0)$, we see that the randomness of all the coefficients $b$, $\sigma$, $\theta$, $f$ and $G$ is only subject to the sub-filtration $\{\sF_t\}_{t\geq 0}$ that is generated by Wiener process $W$ and one may conjecture that the term associated with Wiener process $B$, seen as the Markovian part, may serve to the smoothing property. The answer is affirmative. Under a hypo-ellipticity assumption on the coefficients $\{\sigma^k\}$, $k=1,\dots,d_1$ (see $(\cH)$ below),  we prove that the random field $u(t,x)$ is  almost surely infinitely differentiable with respect to $x$ and each of its derivatives is continuous in $(t,x)$ on $[0,T)\times \bR^d$. 
Compared with the time-smoothness assumption in the classical H\"ormander theorem, the coefficients herein is only required to be measurable with respect to the time variable, and the time-differentiability of $u(t,x)$ is not investigated due to the appearance of the stochastic integral in BSPDE \eqref{BSPDE}.  For the related linear, possibly degenerate BSPDEs, a short self-contained proof is presented for the $L^2$-theory, and in particular, we obtain some new gradient estimates from which we start the proof of the H\"ormander-type theorem. 

Inspired by the filtering theory of partially observable diffusion processes,  Krylov \cite{krylov2013-Hormder-SPDE} has just obtained a H\"ormander-type theorem for \textit{forward} SPDEs. However, there is an essential difference between \textit{forward} SPDEs and BSPDEs, i.e., the noise term in the former is exogenous, while in the latter it comes from the martingale representation and is governed by the coefficients, and thus it is endogenous. On the other hand, we would also not that the method of Krylov \cite{krylov2013-Hormder-SPDE} relies on the generalized It\^o-Wentzell formula and associated results on deterministic PDEs, while we use directly elaborate estimates on solutions of degenerate BSPDEs. 

The study of linear BSPDEs  can date back to  about thirty years ago (see \cite{Bensousan_83}). They arise in many applications of probability theory and stochastic processes, for instance in the nonlinear filtering and stochastic control theory for processes with incomplete information, as an adjoint equation of the Duncan-Mortensen-Zakai filtration equation (for instance, see \cite{Bensousan_83,Hu_Ma_Yong02}). Naturally in the dynamic programming theory, a class of nonlinear BSPDEs as the so-called stochastic Hamilton-Jacobi-Bellman equations, are introduced in the study of non-Markovian control problems (see \cite{Peng_92}). In addition, the representation relationship between forward-backward stochastic differential equations and BSPDEs yields the stochastic Feynman-Kac formula (see \cite{Hu_Ma_Yong02}). The BSPDEs have already received extensive attentions and see \cite{BenderDokuchev-2014,GraeweHorstQui13,Hu_Peng_91,QiuTangMPBSPDE11,QiuWei-RBSPDE-2013,Tang-Wei-2013} and references therein for the recent developments.

The rest of this paper is organized as follows. In section 2, we introduce some notations and show the main result (Theorem \ref{thm-main}). Section 3 is devoted to an $L^2$-theory for linear degenerate BSPDEs. In section 4, we prove the H\"ormander-type theorem.

\section{Preliminaries and main results}

For each $l\in \mathbb{N}^+$ and domain $\Pi\subset \bR^l$, denote by $C_c^{\infty}(\Pi)$ the space of infinitely differentiable functions with compact supports in $\Pi$. $L^2(\bR^d)$ ($L^2$ for short) is the usual Lebesgue integrable space with usual scalar product $\langle\cdot,\,\cdot\rangle$ and norm $\|\cdot\|$.
For  $n\in
(-\infty,\infty)$, we denote by $H ^{n}$ the space of Bessel
potentials, that is
$H ^{n}:=(1-\Delta)^{-\frac{n}{2}} L^{2}$
with the Sobolev norm
$$\|\phi\|_{n}:=\|(1-\Delta)^{\frac{n}{2}}\phi\|_{L^2}, ~~ \phi\in
H^{n}.$$  

For the sake of convenience, we shall also use $\langle \cdot,\,\cdot\rangle$ to denote the duality between $(H^n)^k$ and $(H^{-n})^k$ ($k\in\bN^+,\,n\in\bR$) as well as that between the Schwartz function space $\mathscr{D}$ and  $C_c^{\infty}(\bR^d)$. Moreover, We always omit the index associated to the dimension when there is no confusion.

Given  Banach space ($\mathbb{B}$, $\|\cdot\|_{\mathbb{B}}$), $\cS ^2 (\mathbb{B})$ is the set of all the $\mathbb{B}$-valued,
 $(\sF_t)$-adapted and continuous processes $(X_{t})_{t\in [0,T]}$ such
 that
 $$\|X\|_{\cS ^2(\mathbb{B})}:= \left\| \sup_{t\in [0,T]} \|X_t\|_{\mathbb{B}} \right\|_{L^2(\Omega)}< \infty.$$
  For $p\in[1,\infty]$, denote by $\mathcal{L}^p(\mathbb{B})$ the totality of all the $\mathbb B$-valued,
 $(\sF_t)$-adapted processes $(X_{t})_{t\in [0,T]}$ such
 that
 $$
 \|X\|_{\mathcal{L}^p(V)}:= \left\| \|X_t\|_{\mathbb{B}} \right\|_{L^p(\Omega\times[0,T])}< \infty.
 $$
Obviously, both $(\cS^2(\mathbb{B}),\,\|\cdot\|_{\cS^2(\mathbb{B})})$ and  $(\mathcal{L}^p(\mathbb{B}),\|\cdot\|_{\mathcal{L}^p(\mathbb{B})})$ 
 are Banach spaces.
 
Denote by $C_b$ the space of bounded continuous functions on $\bR^d$ equipped with the usual uniform norm $\|\cdot\|_{\infty}$. Let $C_b^{\infty}$ be the set of infinitely differentiable functions with bounded derivatives of any order.  Denote by $\cL^{\infty}(C_b^{\infty})$ the set of functions $h$ on $\Omega\times [0,T] \times \bR^d$ such that $h(t,x)$ is infinitely differentiable with respect to $x$ and all the derivatives of any order belong to $ \cL^{\infty}(C_b)$.

Throughout this work, we denote $I^n=(1-\Delta)^{\frac{n}{2}}$ for $n\in\bR$. Then $I^n$ belongs to $\Psi_n$ that is the class of pseudo-differential operators of order $n$. By the pseudo-differential operator theory (see \cite{Hormander1983analysis} for instance), the $m$-th order differential operator belongs to $\Psi_m$ for $m\in\bN^+$, the multiplication by elements of $C_b^{\infty}$ lies in $\Psi_0$, and for the reader's convenience, some useful basic results are collected below.
\begin{lem}\label{lem-pdo}
(i). If $J_1\in\Psi_{n_1}$ and $J_2\in\Psi_{n_2}$ with $n_1,n_2\in\bR$, then $J_1J_2\in\Psi_{n_1+n_2}$ and the Lie bracket $[J_1,J_2]:=J_1J_2-J_2J_1\in\Psi_{n_1+n_2-1}$.

(ii). For $m\in (0,\infty)$, let $\zeta$ belong to the continuous function space $C_b^{m}$ which is defined as usual. Then for any $n\in(-m,m)$ there exists constant $C$ such that 
$$
\|\zeta\phi\|_n\leq C \|\zeta\|_{C^{m}}\|\phi\|_n,\quad \forall\,\phi\in H^n.
$$ 
\end{lem}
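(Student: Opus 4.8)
The plan is to reduce both parts to standard facts of the symbolic calculus, which I now outline. For part (i), recall that $J\in\Psi_n$ means $J=\mathrm{Op}(p)$ for a symbol $p=p(x,\xi)$ in the H\"ormander class $S^n$, i.e.\ $|\partial_\xi^\alpha\partial_x^\beta p(x,\xi)|\leq C_{\alpha,\beta}(1+|\xi|)^{n-|\alpha|}$ for all multi-indices $\alpha,\beta$. Writing $J_i=\mathrm{Op}(p_i)$ with $p_i\in S^{n_i}$, I would invoke the composition theorem: $J_1J_2=\mathrm{Op}(q)$ with $q\in S^{n_1+n_2}$ and
\[
q(x,\xi)\;\sim\;\sum_{\alpha}\frac{1}{\alpha!}\,\partial_\xi^\alpha p_1(x,\xi)\,D_x^\alpha p_2(x,\xi),
\]
whose leading ($\alpha=0$) term is $p_1p_2\in S^{n_1+n_2}$; this already gives $J_1J_2\in\Psi_{n_1+n_2}$. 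Applying the same expansion to $J_2J_1$ and subtracting, the $\alpha=0$ terms $p_1p_2$ cancel, so $[J_1,J_2]=\mathrm{Op}(r)$ modulo an infinitely regularizing operator, where
\[
r(x,\xi)\;\sim\;\sum_{|\alpha|\geq1}\frac{1}{\alpha!}\Big(\partial_\xi^\alpha p_1(x,\xi)\,D_x^\alpha p_2(x,\xi)-\partial_\xi^\alpha p_2(x,\xi)\,D_x^\alpha p_1(x,\xi)\Big)\in S^{n_1+n_2-1},
\]
since each $\partial_\xi$ lowers the order by one. Hence $[J_1,J_2]\in\Psi_{n_1+n_2-1}$. (When one of the $J_i$ is a genuine differential operator or multiplication by a $C_b^\infty$ function the expansion terminates and the argument is elementary, which covers all the uses made later in the paper.)

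For part (ii), I would first settle integer orders and then pass to real ones. For $n=0$ the estimate is immediate: $\|\zeta\phi\|\leq\|\zeta\|_\infty\|\phi\|\leq C\|\zeta\|_{C^m}\|\phi\|$ since $m>0$. For a positive integer $n\leq m$, use that $\|\cdot\|_n$ is equivalent to $\sum_{|\alpha|\leq n}\|\partial^\alpha\cdot\|_{L^2}$ together with the Leibniz rule $\partial^\alpha(\zeta\phi)=\sum_{\beta\leq\alpha}\binom{\alpha}{\beta}\partial^\beta\zeta\,\partial^{\alpha-\beta}\phi$, giving $\|\zeta\phi\|_n\leq C\|\zeta\|_{C^n}\|\phi\|_n\leq C\|\zeta\|_{C^m}\|\phi\|_n$. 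For a negative integer $-n$ with $n\leq m$, use duality: for $\psi\in C_c^\infty(\bR^d)$,
\[
|\langle\zeta\phi,\psi\rangle|=|\langle\phi,\zeta\psi\rangle|\leq\|\phi\|_{-n}\,\|\zeta\psi\|_n\leq C\|\zeta\|_{C^m}\|\phi\|_{-n}\|\psi\|_n,
\]
and taking the supremum over $\|\psi\|_n\leq1$ yields $\|\zeta\phi\|_{-n}\leq C\|\zeta\|_{C^m}\|\phi\|_{-n}$. For general real $n\in(-m,m)$ the cleanest self-contained route is a Littlewood--Paley/paraproduct splitting $\zeta\phi=T_\zeta\phi+T_\phi\zeta+R(\zeta,\phi)$: the first term is bounded on $H^n$ for every $n$ merely because $\zeta\in L^\infty$, while the boundedness of the high--high remainder $R(\zeta,\phi)$ and of the low--high paraproduct $T_\phi\zeta$ into $H^n$ uses $\zeta\in C^m$ and forces precisely $n<m$ (the bound $n>-m$ being recovered by the duality step above). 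Alternatively, when $m\geq1$ one may sandwich $n$ between two integers of modulus at most $m$ and complex-interpolate from the integer cases, since $(H^{N_1},H^{N_2})_{[\theta]}=H^n$ for the appropriate $\theta$.

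I expect the main obstacle to be bookkeeping rather than conceptual. In part (i) the genuine work is justifying the asymptotic expansions with explicit control of their remainders, so that the \emph{difference} of the two full composed symbols lies in $S^{n_1+n_2-1}$ and not merely that the principal parts cancel; this rests on the standard oscillatory-integral estimates of the calculus (and, if one wants exact operators, on passing to properly supported representatives modulo smoothing operators). In part (ii) the delicate point is the sharpness of the range $n\in(-m,m)$: the endpoints $n=\pm m$ must be genuinely excluded when $m$ is an integer, which is exactly why the interpolation/paraproduct estimates have to be kept with strict inequalities.
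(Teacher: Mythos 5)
The paper does not prove this lemma at all: it is presented as a collection of standard facts from the pseudo-differential calculus, with a pointer to H\"ormander's treatise \cite{Hormander1983analysis}, and the only content the paper actually relies on is the case where the operators are differential operators with $\cL^\infty(C_b^\infty)$ coefficients composed with the Bessel potentials $I^n$. Your outline supplies correct proofs of both parts along the standard lines. For (i), the symbolic composition theorem in the uniform (global) class $S^{n}_{1,0}$ on $\bR^d$ and the cancellation of the $\alpha=0$ terms in the commutator is exactly the right argument, and your remark that the expansion terminates when one factor is a differential or multiplication operator covers every instance used in the paper ($[I^m,L_k]$, $[M_k',I^m]$, $[[M_k,I^m],M_k]$, etc.). For (ii), the Leibniz-plus-duality treatment of integer orders and the paraproduct decomposition for general real $n\in(-m,m)$ is the standard proof and gives the sharp range. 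One small caveat: your interpolation ``alternative'' does not by itself cover the whole claimed range, since for non-integer $m$ and $\lfloor m\rfloor<|n|<m$ there is no integer of modulus at most $m$ lying above $|n|$ to interpolate against; so the paraproduct (or an interpolation between H\"older--Zygmund rather than integer $C^k$ classes) is genuinely needed there, as you in effect acknowledge by presenting it as the primary route. With that understood, the proposal is a faithful and complete account of facts the paper merely cites.
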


Set 
$$\bV_0=\{L_1,\dots,L_{d_1}\} \quad \text{and} \quad \bV_{n+1}=\bV_n\cup\{[L_k,V]:\,V\in\bV_n,\,k=1,\dots,d_1\}.$$
Denote by $\bL_n$ the set of linear combinations of elements of $\bV_n$ with coefficients of $\cL^{\infty}(C^{\infty}_b)$. We then introduce the following H\"ormander-type condition.

\medskip
$({\mathcal H} )$ \it 
   There exists $n_0\in\bN_0$ such that $\{D_i:i=1,\dots,d\}\subset \bL_{n_0}$.
   \rm \medskip
   
Throughout this paper, we denote $\eta=2^{-n_0}$.

Instead of BSPDE \eqref{BSPDE}, we consider the following one of the general form
\begin{equation}\label{BSPDE-D-M}
  \left\{\begin{array}{l}
  \begin{split}
  -du(t,x)=\,&\displaystyle \bigg[ \left( \frac{1}{2}L_k^2+\frac{1}{2}M_k^2\right)u+M_kv^k
         +{b}^jD_ju
         +cu+\gamma^lv^l+f+L_kg^k
                \bigg](t,x)\, dt
         \\ &\displaystyle
           -v^{r}(t,x)\, dW_{t}^{r},\quad (t,x)\in Q;\\
    u(T,x)=\, &G(x),\quad x\in\bR^d.
    \end{split}
  \end{array}\right.
\end{equation}
We define the following assumption.

\medskip
   $({\mathcal A} 1)$ \it 
   For $i=1,\dots,d$, $k=1,\dots,d_1$, $\sigma^{ik},\theta^{ik},b^i,\gamma^k, c\in\cL^{\infty}(C_b^{\infty})$.
   \medskip

\begin{defn}\label{defn-solution}
  A pair of processes $(u,v)$
  is called a solution to BSPDE \eqref{BSPDE-D-M}
  if $(u,v)\in \cS^2(H^m)\times\cL^2(H^{m-1})$ for some $m\in\bR$ and BSPDE   \eqref{BSPDE} holds in the distributional sense, i.e.,
   for any $\zeta\in C^{\infty}_c(\bR)\otimes C_c^{\infty}(\bR^d)$ there holds almost surely
  \begin{equation*}
    \begin{split}
      &\langle \zeta(t),\,u(t)\rangle-\langle \zeta(T),\, G\rangle
      +\!\int_t^T\!\!\langle \partial_s \zeta(s),\, u(s) \rangle \, ds
      +\!\int_t^T \!\!\langle \zeta(s),\, v^r(s)\rangle  \,dW_s^r
      \\
      &=
      \int_t^T\!\! \bigg\langle  \zeta,\,
      \frac{1}{2}(L_k^2+M_k^2)u+M_kv^k
         +{b}^jD_ju+cu+\gamma^lv^l+f+L_kg^k
          \bigg\rangle(s)\, ds , \quad \forall \ t\in[0,T].
    \end{split}
  \end{equation*}
\end{defn}

We now state our main result. The following theorem is a summary of Theorem \ref{thm-BSPDE}, Corollary \ref{cor-regularity} and Theorem \ref{thm-hormand}.

\begin{thm}\label{thm-main}
Let assumption $(\cA 1)$ hold. Assume $(f,g,G)\in \cL^2(H^m)\times \cL^2((H^m)^{d_1}) \times L^2(\Omega,\sF_T;H^{m})$, for some $m\in\bR$. There hold the following three assertions:
\medskip \\
(i) 
BSPDE  \eqref{BSPDE-D-M} admits a unique solution $(u,v)\in \cS^2(H^{m})\times\cL^2(H^{m-1})$ with $(L_ku,v^k+M_ku)\in\cL^2(H^m)\times\cL^2(H^m)$, $k=1,\dots,d_1$, and
\begin{align}
&E\sup_{t\in[0,T]}\|u(t)\|_m^2+E\int_{0}^T\left(\sum_{k=1}^{d_1}\|L_ku(t)\|_m^2+\|v(t)+Du(t)\theta(t)\|_m^2\right)dt
\nonumber\\
&\leq C\left\{
E\|G\|_m^2 + E\int_{0}^T \left(\|f(s)\|_m^2+\|g(s)\|_m^2\right)\,ds\right\},\label{estim-thm}
\end{align}
with $C$ depending on $T,m$ and quantities related to coefficients $\sigma,\theta,b,c$ and $\gamma$. 
\medskip \\
(ii) If the H\"ormander-type condition $(\cH)$ holds,   for the above solution $(u,v)$, 
we have further 
\begin{align}
E\int_{0}^T\|u(t)\|_{m+\eta}^2dt
\leq C\left\{
E\|G\|_m^2 + E\int_{0}^T \left(\|f(s)\|_m^2+\|g(s)\|_m^2\right)\,ds\right\},\label{estim-eta-thm}
\end{align}
with $C$ depending on $T,m,n_0,\sigma,\theta,b,c$ and $\gamma$.
\medskip \\
(iii) If both $(f,g)\in \cap_{n\in\bR}\left(\cL^2(H^n)\times\cL^2((H^n)^{d_1})\right)$ and assumption $({\mathcal H} )$ hold, we have for each $\eps\in (0,T)$,
$$(u,v)\in\cap_{n\in\bR} L^2(\Omega;C([0,T-\eps];H^n))\times L^2(\Omega;L^2(0,T-\eps;H^{n-1})),$$
and for any $n\in\bR$
 \begin{align}
 &E\sup_{t\in[0,T-\eps]}\| u(t)\|_{n}^2
+E\int_{0}^{T-\eps}\left(\| u(t)\|_{n+\eta}^2+
\| v(t)+D u(t)\theta(t)\|_{n}^2\right)dt
\nonumber\\
&\leq C\left\{ E\|G\|_m^2+ E\int_{0}^{T} \left(\|f(s)\|_{n}^2+ \|g(s)\|_{n}^2  \right)\,ds\right\},\nonumber
 \end{align}
 with the constant $C$ depending on $\eps,T,n,m,n_0,\sigma,\theta,\gamma,b$ and $c$. In particular, the random field $u(t,x)$ is  infinitely differentiable with respect to $x$ on $[0,T)\times\bR^d$ and each derivative is a continuous function on $[0,T)\times\bR^d$.	
\end{thm}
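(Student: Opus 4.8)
The plan is to prove the three assertions in the order stated, since each builds on the previous one. For part (i), I would first establish \emph{a priori} estimate \eqref{estim-thm} by a formal energy argument: apply It\^o's formula to $\|u(t)\|_m^2$ (or rather to $\|I^m u(t)\|^2$), integrate from $t$ to $T$, and take expectations. The key algebraic point is that the second-order operators $\tfrac12 L_k^2$ and $\tfrac12 M_k^2$, combined with the quadratic-variation term $\|v+Du\,\theta\|_m^2$ coming from the $dW$ integral, assemble into the \emph{nonnegative} combination $-\sum_k\|L_ku\|_m^2-\|v+Du\,\theta\|_m^2$ plus lower-order commutator terms; the $M_kv^k$ and $M_k^2 u$ terms must be regrouped as $\|v+Du\,\theta\|_m^2$ up to a perfect square. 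One then uses Lemma \ref{lem-pdo}(i) to bound all commutators $[I^m,L_k]$, $[I^m,\sigma^{jk}D_j]$, etc., by operators of order $\le m$, absorbs them via Young's inequality and the $C_b^\infty$-bounds of assumption $(\cA1)$, and closes the estimate with Gronwall. Existence and uniqueness then follow by the standard route: regularize/truncate the coefficients (or use a continuation-in-a-parameter / method of continuity argument from the nondegenerate case, where the super-parabolic $L^2$-theory is classical), obtain uniform bounds from \eqref{estim-thm}, and pass to the weak limit; uniqueness is immediate from \eqref{estim-thm} applied to the difference of two solutions with zero data.

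For part (ii), the H\"ormander gain, the strategy is the bootstrapping built into condition $(\cH)$. The estimate \eqref{estim-thm} already controls $\sum_k\|L_ku\|_m^2$ in $\cL^2$, i.e.\ each first-generation field $L_ku$ is half a derivative better than na\"ively expected in the sense that it lies in $\cL^2(H^m)$ rather than merely $\cL^2(H^{m-1})$. I would then show inductively that each $V u\in\cL^2(H^m)$ for every $V\in\bV_n$: given $Vu\in\cL^2(H^m)$ for $V\in\bV_n$, the field $Vu$ solves a BSPDE of the same structural form \eqref{BSPDE-D-M} (differentiate the equation, i.e.\ apply $V$ to it, picking up commutators $[V,\tfrac12 L_k^2]$ etc.\ which by Lemma \ref{lem-pdo}(i) are of order $\le$ the order of $V$ and hence, by the inductive hypothesis together with part (i), land in $\cL^2(H^m)$ as source terms $f,g$). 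Applying the part (i) estimate to this equation yields $L_k(Vu)=[L_k,V]u+V(L_ku)\in\cL^2(H^m)$, so every $[L_k,V]u\in\cL^2(H^m)$, which is exactly the statement that every element of $\bV_{n+1}$ applied to $u$ lies in $\cL^2(H^m)$. By condition $(\cH)$, after $n_0$ steps the span of $\bV_{n_0}$ contains all $D_i$; a careful accounting of how much Sobolev regularity is transferred at each commutator step (each generation costs a controlled fraction, and the arithmetic is rigged so that $n_0$ generations buy $\eta=2^{-n_0}$ net derivatives of $u$) gives $u\in\cL^2(H^{m+\eta})$ with the stated bound \eqref{estim-eta-thm}. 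This interpolation/accounting is where I expect the main technical obstacle to lie: one must track norms of the iterated commutator systems and the coefficients in $\bL_n$ uniformly, and in particular justify that the auxiliary BSPDEs at each level genuinely have $\cL^2(H^m)$ data rather than something weaker, which requires using the gradient estimate $L_ku\in\cL^2(H^m)$ from (i) in an essential way at every step.

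For part (iii), I would iterate: once $u\in\cL^2(H^{m+\eta})$ is known and $(f,g)$ lie in $\cL^2(H^n)$ for \emph{every} $n$, apply (ii) again — but now away from the terminal time, on $[0,T-\eps]$, where the loss of regularity at $t=T$ caused by the (possibly irregular relative to higher norms) terminal datum $G$ does not bite. Concretely, pick a smooth time cutoff $\chi(t)$ that vanishes near $T$ and equals $1$ on $[0,T-\eps]$; then $(\chi u,\chi v)$ solves a BSPDE with zero terminal value and with source terms involving $\chi' u$, which by the already-established regularity lies in $\cL^2(H^{m+\eta})$. Feeding this into (ii) raises the regularity of $u$ on $[0,T-\eps']$ by another $\eta$ for $\eps'$ slightly larger than $\eps$; a finite iteration (each step shrinking the time interval by an arbitrarily small amount and using a fresh cutoff) reaches $u\in\cL^2(H^n)$ on $[0,T-\eps]$ for arbitrarily large $n$, hence for all $n$, with the displayed estimate — the constant degrading with $\eps$ because more iteration steps and steeper cutoffs are needed as $\eps\downarrow0$. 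The $\cS^2(H^n)$ and $\cL^2(H^{n-1})$ membership of $(u,v)$ on $[0,T-\eps]$ then comes from applying the part (i) estimate on $[0,T-\eps]$ with the now-known high-regularity data. Finally, Sobolev embedding $H^n\hookrightarrow C^k_b$ for $n>k+d/2$, applied for each $k$, upgrades the $L^2(\Omega;C([0,T-\eps];H^n))$ bound to almost-sure $C^k$-in-$x$ regularity with continuity in $(t,x)$, and since $\eps\in(0,T)$ is arbitrary this gives smoothness in $x$ on all of $[0,T)\times\bR^d$ with each derivative continuous there, completing the proof.
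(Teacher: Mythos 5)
Your treatment of part (i) follows the paper's route (It\^o formula for $\|I^m u\|^2$, regrouping the $M_k$-terms with the martingale quadratic variation into $\|v+Du\,\theta\|_m^2$, commutator bounds from Lemma \ref{lem-pdo}, Gronwall, then vanishing viscosity plus mollified data for existence), and your part (iii) --- cutting off in time near $T$ to neutralize the terminal datum and iterating the regularity gain on a shrinking sequence of intervals $[0,T-\eps_j]$ --- is essentially the paper's argument with $(T-t)u$ playing the role of your $\chi(t)u$. The problem is part (ii), where your proposal has a genuine gap and, as stated, aims at a conclusion that is too strong to be true in general. Your inductive claim is that $Vu\in\cL^2(H^m)$ for \emph{every} $V\in\bV_n$ and every $n$; combined with $(\cH)$ this would give $D_iu\in\cL^2(H^m)$, i.e.\ $u\in\cL^2(H^{m+1})$ --- a gain of a full derivative --- whereas the theorem only delivers (and hypoellipticity only permits) the fractional gain $\eta=2^{-n_0}$. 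Moreover the mechanism you propose, applying $V$ to the equation and invoking part (i) for the resulting BSPDE, breaks down twice: the terminal datum of the differentiated equation is $VG\in H^{m-1}$ only, so the part (i) estimate at level $m$ is unavailable before the terminal time is cut off (which you only do in part (iii)); and the source terms contain contributions of the type $L_k[V,L_k]u$ whose control requires precisely the bracket estimates you are trying to establish --- a circularity.

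The missing ingredient is the paper's Lemma \ref{lem-lie-bracket}: a purely functional-analytic, equation-free estimate asserting that if $\phi\in H^m$, $L\phi\in H^m$ and $\widetilde L\phi\in H^{m-1+\eps}$, then $[\widetilde L,L]\phi\in H^{m-1+\eps/2}$ with the corresponding norm bound. It is proved by pairing $[\widetilde L,L]\phi$ against $I^mA^{m-1+\eps}\phi$, where $A^{n}=I^{n-1}[\widetilde L,L]$, and shuffling adjoints and commutators with the pseudo-differential calculus of Lemma \ref{lem-pdo}. This is what makes the arithmetic work: the ``excess'' regularity above the base level $m-1$ starts at $1$ for $\bV_0$ (since $L_ku\in\cL^2(H^m)$ by \eqref{estim-thm}) and is \emph{halved} at each bracket generation, so elements of $\bV_{n_0}$ applied to $u$ land in $\cL^2(H^{m-1+2^{-n_0}})$, and $(\cH)$ then gives $Du\in\cL^2(H^{m-1+\eta})$, i.e.\ $u\in\cL^2(H^{m+\eta})$, without ever differentiating the equation. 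Without this lemma (or an equivalent subelliptic estimate) your ``careful accounting'' has no quantitative content and the specific value $\eta=2^{-n_0}$ cannot be recovered; note that this halving is also exactly why the $L^2$-theory must be developed for arbitrary real $m$, since the bracket lemma produces non-integer Sobolev indices.
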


\begin{rmk}\label{rmk-thm-BSPDE}
An $L^2$-theory on degenerate BSPDEs was initiated by Zhou \cite{Zhou_92}, and it was developed  recently by \cite{DuTangZhang-2013,Hu_Ma_Yong02,ma1999linear}. Along this line, to get a solution of BSPDE \eqref{BSPDE-D-M} in space $\cS^2(H^m)\times\cL^2(H^{m-1})$ requires that $L_kg^k$ lies in $ \cL^2(H^m)$ for some $m\in \bN^+$, but in (i) of Theorem \ref{thm-main}, $g^k$ is allowed to be in $ \cL^2(H^m)$ and thus $L_kg^k\in\cL^2(H^{m-1})$, and there holds the additional gradient estimate $L_ku\in\cL^2(H^m)$, for $k=1,\dots,d_1$.  Moreover, compared with the existing $L^2$-theory on degenerate BSPDEs, $m$ herein can be any real number instead of being restricted to positive integers, and under the H\"ormander-type condition $(\mathcal{H})$, one further has $u\in\cL^2(H^{m+\eta})$ in (ii). Hence, the $L^2$-theory presented in (i) and (ii) of Theorem \ref{thm-main} seems to be of independent interest.

Starting from the estimate of $L_ku$,  we prove the H\"ormander-type theorem ((iii) of Theorem \ref{thm-main}) by increasing the regularity of $u$ step by step. In this paper, it is indeed necessary to allow $m$ to be real number in the $L^2$-theory, as for each step the regularity is increased from $m$ to $m+\eps$ for a possibly  \textit{real}  number $\eps\in (0,1]$ (see Section \ref{sec:proof-main-thm} below for more details). 
\end{rmk}


\section{An $L^2$ theory of linear BSPDEs}

We consider the following BSPDE
\begin{equation}\label{BSPDE-D}
  \left\{\begin{array}{l}
  \begin{split}
  -du(t,x)&= \Big[  \frac{1}{2}\Big(L_k^2+M_k^2\Big) u+M_kv^k
         +{b}^jD_ju
         +cu+\gamma^lv^l+f+L_kg^k
                \Big](t,x)\, dt
         \\ &
           \,\, + \delta \Delta u(t,x)\,dt-v^{r}(t,x)\, dW_{t}^{r},\quad (t,x)\in Q;\\
    u(T,x)&=G(x),\quad x\in \bR^d,
    \end{split}
  \end{array}\right.
\end{equation}
with $\delta \geq 0$.

Note that we do not need the H\"ormander-type condition $(\cH)$ in this section. We would first give an a priori estimate on the solution for BSPDE  \eqref{BSPDE-D}.
\begin{prop}\label{prop-apriori-estm}
Let assumption $(\cA 1)$ hold. For $(f,g,G)\in \cL^2(H^m)\times \cL^2((H^m)^{d_1})\times L^2(\Omega,\sF_T;H^{m})$ with $m\in \bR$, if $(u,v)\in \left(\cS^2(H^{m+1})\cap\cL^2(H^{m+2})\right)\times\cL^2((H^{m+1})^{d_1})$ is a solution of BSPDE \eqref{BSPDE-D}, then one has
\begin{align}
&E\sup_{t\in[0,T]}\|u(t)\|_m^2+E\int_{0}^T\left(\delta\|Du(t)\|_m^2+\sum_{k=1}^{d_1}\|L_ku(t)\|_m^2+\|v(t)+Du(t)\theta(t)\|_m^2\right)dt
\nonumber\\
&\leq C\left\{
E\|G\|_m^2 + E\int_{0}^T \left(\|f(s)\|_m^2+\|g(s)\|_m^2\right)\,ds\right\},\label{estim-aprioi-prop}
\end{align}
with $C$ depends only on $T,m,\sigma,\theta,\gamma,b$ and $c$.
\end{prop}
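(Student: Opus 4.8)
My plan is to prove this by an $H^m$-energy estimate: apply It\^o's formula to $\|u(t)\|_m^2$ and control every resulting term with the pseudodifferential commutator calculus of Lemma~\ref{lem-pdo}. Write $U=I^mu$, $V^r=I^mv^r$ and $z^k:=v^k+M_ku$, so that $z=v+Du\theta$. Since by hypothesis $(u,v)\in(\cS^2(H^{m+1})\cap\cL^2(H^{m+2}))\times\cL^2((H^{m+1})^{d_1})$, every quantity below is finite and It\^o's formula applied to $\|u(t)\|_m^2=\|U(t)\|^2$ yields, a.s. for all $t\in[0,T]$,
\begin{equation*}
\|u(t)\|_m^2+\int_t^T\|v\|_m^2\,ds=\|G\|_m^2+2\int_t^T\la u,\Phi\ra_m\,ds-2\int_t^T\la u,v^r\ra_m\,dW^r_s,
\end{equation*}
with $\Phi$ the drift in \eqref{BSPDE-D}. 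After moving the $\Phi$-terms to the left, the goal is to show the left side dominates $\|u(t)\|_m^2+\int_t^T\big(\delta\|Du\|_m^2+\sum_k\|L_ku\|_m^2+\|z\|_m^2\big)ds$ modulo $C\int_t^T\|u\|_m^2\,ds$ and the data.

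The step I expect to be hardest is the coupled combination $X:=\|v\|_m^2-\la u,M_k^2u\ra_m-2\la u,M_kv^k\ra_m$. Using $I^mM_k=M_kI^m+C_k$ with $C_k:=[I^m,M_k]=[I^m,\theta^{jk}]D_j\in\Psi_m$ and substituting $v^k=z^k-M_ku$, I anticipate the identity
\begin{equation*}
X=X_0(U,V)+\la U,[C_k,M_k]u\ra-2\la U,C_kz^k\ra,\qquad X_0(U,V):=\|V\|^2-\la U,M_k^2U\ra-2\la U,M_kV^k\ra .
\end{equation*}
The crucial point is that the double commutator $[C_k,M_k]=[[I^m,M_k],M_k]$ is \emph{again} of order $m$, so $|\la U,[C_k,M_k]u\ra|\le C\|u\|_m^2$ needs only $u\in H^m$, while the single terms $\la U,M_kC_ku\ra$, $\la U,C_kM_ku\ra$ are one order worse; this cancellation is indispensable, since in the degenerate case neither $\sum_k\|M_ku\|_m^2=\|Du\theta\|_m^2$ nor $\|v\|_m^2$ is controlled, only $\|v+Du\theta\|_m$. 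Then $\la U,C_kz^k\ra=\la C_k^*U,z^k\ra\le\eps\|z\|_m^2+C_\eps\|u\|_m^2$ because $C_k^*\in\Psi_m$, and for $X_0(U,V)$ I would integrate by parts at the $L^2$-level with $M_k^*=-M_k-D_j\theta^{jk}$ to get $X_0(U,V)=\|V^k+M_kU\|^2+\la(D_j\theta^{jk})U,\,2(V^k+M_kU)-M_kU\ra$, bound $\la(D_j\theta^{jk})U,M_kU\ra$ by $O(\|U\|^2)$ (first-order form), apply Cauchy--Schwarz to the rest, and use $V^k+M_kU=I^mz^k-C_ku$. Together this should give $X\ge\tfrac12\|v+Du\theta\|_m^2-C\|u\|_m^2$ once $\eps$ is small.

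The remaining terms are routine by comparison. For the self-adjoint term I would show $-\la u,L_k^2u\ra_m\ge c\sum_k\|L_ku\|_m^2-C\|u\|_m^2$ in the same way: with $\tilde C_k:=[I^m,L_k]\in\Psi_m$, write $I^mL_k^2u=L_k^2U+(L_k\tilde C_k+\tilde C_kL_k)u$, use $L_k\tilde C_k+\tilde C_kL_k=2L_k\tilde C_k+[\tilde C_k,L_k]$ with the order-$m$ commutator $[\tilde C_k,L_k]$, integrate by parts via $L_k^*=-L_k-D_j\sigma^{jk}$, and exploit $\big|\,\|L_kU\|-\|L_ku\|_m\,\big|\le C\|u\|_m$. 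The $\delta\Delta u$ term gives exactly $-2\delta\la u,\Delta u\ra_m=2\delta\|Du\|_m^2$ ($I^m$ commutes with $\Delta$); $b^jD_ju$ and $cu$ give $O(\|u\|_m^2)$ after one integration by parts plus a commutator bound; and for the sources I would write $v^l=z^l-M_lu$, split $\gamma^lv^l$ into $\gamma^lz^l$ (contributing $\eps\|z\|_m^2+C_\eps\|u\|_m^2$) and the first-order $\gamma^lM_lu$ ($O(\|u\|_m^2)$), integrate $L_kg^k$ by parts ($\eps\|L_ku\|_m^2+C_\eps(\|u\|_m^2+\|g\|_m^2)$), and bound $\la u,f\ra_m\le\tfrac12(\|u\|_m^2+\|f\|_m^2)$. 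Collecting and choosing $\eps$ small to absorb the $\eps$-terms into the good ones, I would reach the pathwise inequality
\begin{equation*}
\|u(t)\|_m^2+\int_t^T\!\Big(\delta\|Du\|_m^2+c\sum_k\|L_ku\|_m^2+c\|v+Du\theta\|_m^2\Big)ds\le\|G\|_m^2+C\!\int_t^T\!\|u\|_m^2\,ds+C\!\int_t^T\!(\|f\|_m^2+\|g\|_m^2)ds-2\!\int_t^T\!\la u,v^r\ra_m dW^r_s.
\end{equation*}

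Finally, taking expectations --- localizing the stochastic integral by stopping times and then passing to the limit by monotone/dominated convergence --- and applying the backward Gronwall inequality gives $\sup_tE\|u(t)\|_m^2\le C\{E\|G\|_m^2+E\int_0^T(\|f\|_m^2+\|g\|_m^2)ds\}$; feeding this back into the display bounds the $ds$-integrals. For the $\sup$-in-$t$ part I would take $\sup_t$ and then $E$ in the pathwise identity and handle $E\sup_t|\int_t^T\la u,v^r\ra_m dW^r|$ by Burkholder--Davis--Gundy after splitting $v^r=(v+Du\theta)^r-M_ru$: the first piece contributes $\tfrac14E\sup_t\|u\|_m^2+CE\int_0^T\|v+Du\theta\|_m^2$, already controlled, and $\la u,M_ru\ra_m=O(\|u\|_m^2)$ contributes $\tfrac14E\sup_t\|u\|_m^2+CE\int_0^T\|u\|_m^2$. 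The genuine difficulty throughout is the commutator bookkeeping above: one must be certain that no uncontrollable $\|Du\theta\|_m^2$ or $\|v\|_m^2$ survives --- in the degenerate regime these cannot be absorbed --- so the offending terms have to cancel \emph{identically}, which is exactly what the order-$m$ double commutators $[C_k,M_k]$ and $[\tilde C_k,L_k]$ provide.
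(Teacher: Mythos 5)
Your proposal is correct and follows essentially the same route as the paper: Itô's formula for $\|I^m u\|^2$, pseudodifferential commutator calculus (including the crucial order-$m$ double commutators $[[I^m,M_k],M_k]$ and $[[I^m,L_k],L_k]$) to reduce everything to $\|L_ku\|_m$, $\|v+Du\theta\|_m$ and $O(\|u\|_m^2)$ remainders, then Gronwall and Burkholder--Davis--Gundy for the supremum. The only difference is cosmetic bookkeeping: the paper substitutes $v^k=\xi^k-M_ku$ with $\xi=v+Du\theta$ from the outset and works through the chain of commutator identities directly, whereas you isolate the coupled combination $X$ and its principal part $X_0(U,V)$ first, but the cancellation exploited is identical.
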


\begin{proof}
Set $\xi=v+Du\theta$. Putting $L_k':=D_i(\sigma^{ik}\cdot)$ and $M_k'=D_i(\theta^{ik}\cdot)$, we have $L_k=L_k'+c_k$ and $M_k=M_k'+\alpha_k$ with $c_k=-(D_i\sigma^{ik})\cdot$ and $\alpha_k=-(D_i\theta^{ik})\cdot$, for $k=1,\dots,d_1$.
 Applying It\^o formula for the square norm (see e.g. \cite[Theorem 3.1]{Krylov_Rozovskii81}), one has almost surely for $t\in[0,T]$,
\begin{align}
&\|I^mu(t)\|^2+\int_t^T\left(2\delta\|I^mDu(s)\|^2+\|I^m(\xi-Du\theta)(s)\|^2\right)ds
\rangle\nonumber
\\
&=\|I^mG\|^2+
\int_t^T\left\langle
I^mu(s),\,
I^m\left((L_k^2+M_k^2)u+2M_k(\xi^k-D_iu\theta^{ik})
         \right)(s)\right\rangle\,ds
         \nonumber\\
         &\quad
         +\int_t^T2\left\langle I^mu(s),\,I^m\left({b}^jD_ju+\gamma^l(\xi^l-D_iu\theta^{il})+cu+f+L_kg\right)(s)\right\rangle\,ds\nonumber\\
         &-\int_t^T \!\!\!2\langle I^m u(s),\,I^m(\xi-Du\theta)(s)\,dW_s. \label{eq-prop-ito}
\end{align}

First, basic calculations yield
\begin{align}
&\langle I^mu,\,I^m(L_k^2u)\rangle\nonumber\\
&=\langle I^mu,\,I^m(L'_k+c_k)L_ku\rangle\nonumber\\
&=-\langle L_kI^mu,\, I^mL_ku\rangle +\langle I^mu,\,[I^m,L_k']L_ku+ I^mc_kL_ku\rangle\nonumber\\
&=-\| I^mL_ku\|^2 +\langle [I^m,L_k]u,\,I^mL_ku\rangle+\langle I^mu,\,[I^m,L_k']L_ku+ I^mc_kL_ku\rangle\nonumber\\
&\leq-(1-\eps)\|I^mL_ku\|^2+C_{\eps}\|I^mu\|^2,\quad\eps\in(0,1),\label{eq-rela-u}\\
\text{ }\nonumber\\
&\langle I^m u,\,I^m(\gamma^l\xi^l+cu+f+L_kg^k)\rangle\nonumber\\
&=
\langle I^m u,\,I^m(\gamma^l\xi^l+cu+f)\rangle
+\langle I^m u,\,(L_kI^m+[I^m,L_k])g^k\rangle\nonumber\\
&=
 \langle I^m u,\,I^m(\gamma^l\xi^l+cu+f)\rangle
-\langle I^m L_ku+[L_k,I^m]u,\,I^mg^k\rangle
+\langle I^m u,\,(\alpha_kI^m+[I^m,L_k])g^k\rangle\nonumber\\
&\leq
\eps \left(\|I^mL_ku\|^2 +\|I^m\xi^l\|^2 \right)+C_{\eps}\left(\|I^mu\|^2+ \|I^mf\|^2+\|I^mg^k\|^2   \right),\quad \eps\in(0,1),
                                           \label{eq-rela-fgc}
\end{align}
and
\begin{align}
&\langle
I^mu,\,
I^m(M_k^2u+2M_k(\xi^k-D_iu\theta^{ik})
        )\rangle\nonumber\\
&= \langle
I^mu,\,
I^m(-M_k^2u+2M_k\xi^k
        )\rangle\nonumber\\  
 &= \langle
		I^mu,\,-M_k'I^mM_ku+[M_k',I^m]M_ku\rangle+  \langle I^mu,\,I^m(2M_k'\xi^k+2\alpha_k\xi^k-\alpha_kM_ku)
        \rangle\nonumber\\
 &= \langle
		M_kI^mu,\,I^mM_ku\rangle +
		\langle I^mu,\,[M_k',I^m]M_ku\rangle-2\langle M_kI^mu,\,I^m\xi^k\rangle+2\langle I^mu,\,[I^m,M_k']\xi^k\rangle\nonumber\\
	&\quad
		+  
		\langle I^mu,\,I^m(2\alpha_k\xi^k-\alpha_kM_ku)
        \rangle\nonumber\\
 &=\|I^mM_ku\|^2-2\langle I^mM_ku,\,I^m\xi^k\rangle
 +\langle [M_k,I^m]u,\,I^mM_ku\rangle
 +\langle I^mu,\,[M_k',I^m]M_ku\rangle\nonumber\\
 &\quad-2\langle [M_k,I^m]u,\,I^m\xi^k\rangle 
 +2\langle I^mu,\,[I^m,M_k']\xi^k\rangle
 +  
		\langle I^mu,\,I^m(2\alpha_k\xi^k-\alpha_kM_ku)
        \rangle\nonumber\\      
&=\|I^mM_ku\|^2-2\langle I^mM_ku,\,I^m\xi^k\rangle
 -\|[M_k,I^m]u\|^2\nonumber\\
 &\quad
 +\langle [M_k,I^m]u,\,M_kI^mu\rangle
 +\langle I^mu,\,[M_k',I^m]M_ku\rangle
 -\langle I^mu,\,\alpha_kM_kI^mu\rangle
 \nonumber\\
 &\quad-2\langle [M_k,I^m]u,\,I^m\xi^k\rangle 
 +2\langle I^mu,\,[I^m,M_k']\xi^k\rangle
 +  
		\langle I^mu,\,I^m(2\alpha_k\xi^k)+[\alpha_kM_k,I^m]u
        \rangle\nonumber\\
&=\|I^mM_ku\|^2-2\langle I^mM_ku,\,I^m\xi^k\rangle
 -\|[M_k,I^m]u\|^2\nonumber\\
 &\quad
 +\langle I^mu,\,\alpha_k[M_k,I^m]u-  [\alpha_k,I^m]M_ku\rangle
 -\langle I^mu,\,\alpha_kM_kI^mu\rangle
 +\langle I^m u,\,[[M_k,I^m],M_k]u\rangle
 \nonumber\\
 &\quad-2\langle [M_k,I^m]u,\,I^m\xi^k\rangle 
 +2\langle I^mu,\,[I^m,M_k]\xi^k\rangle
 +  
		\langle I^mu,\,2\alpha_kI^m\xi^k+[\alpha_kM_k,I^m]u
        \rangle\nonumber\\  
&\leq \|I^mM_ku\|^2-2\langle I^mM_ku,\,I^m\xi^k\rangle
 -\|[M_k,I^m]u\|^2\nonumber\\
 &\quad+\eps\|I^m\xi\|^2+C_{\eps}\|I^mu\|^2,\quad\eps\in(0,1),      \label{eq-relat-uxi}
\end{align}
where we have used the relation 
\begin{align}
\langle I^mu,\,\alpha_kM_kI^mu\rangle=-\frac{1}{2}\langle I^mu,\,D_i(\alpha_k\theta^{ik})I^mu\rangle.\label{eq-relat-adu}
\end{align}

Noticing relations like \eqref{eq-relat-adu} and  that for $i=1,\dots,d$, $k=1,\dots,d_1$,
\begin{align*}
&\|I^m(\xi^k-M_ku)\|^2=\|I^m\xi^k\|^2-2\langle I^m\xi^k,\,I^mM_ku\rangle+\|I^mM_ku\|^2\\
&\langle I^mu,\,I^m(\gamma^kD_iu\theta^{ik})\rangle 	=\frac{1}{2}
\langle I^mu,\,D_i(\gamma^k\theta^{ik})I^mu
+2[\gamma^k\theta^{ik}D_i,\,I^m]u\rangle,\\
&\langle I^mu,\,I^m(b^iD_iu)\rangle
=-\frac{1}{2}\langle I^mu,\,D_i(b^i)I^mu+2[b^iD_i,I^m]u\rangle,
\end{align*} 
 putting \eqref{eq-prop-ito}, \eqref{eq-rela-u}, \eqref{eq-rela-fgc} and \eqref{eq-relat-uxi} together, and taking expectations on both sides of \eqref{eq-prop-ito}, one gets by Gronwall inequality
\begin{align}
&\sup_{t\in[0,T]}E\|u(t)\|_m^2+E\int_{0}^T\left(\delta\|Du(t)\|_m^2+\sum_{k=1}^{d_1}\|L_ku(t)\|_m^2+\|v(t)+Du(t)\theta(t)\|_m^2\right)dt
\nonumber\\
&\leq C\left\{
E\|G\|_m^2 + E\int_{0}^T \left(\|f(s)\|_m^2+\|g(s)\|_m^2\right)\,ds\right\}. \label{eq-prf-0}
\end{align}
On the other hand, one has for each $t\in[0,T)$
\begin{align*}
&E\sup_{\tau\in[t,T]}\bigg|\int_{\tau}^T \!\!\!2\left\langle I^m u(s),\,I^m(\xi-Du\theta)(s)\,dW_s\right\rangle\bigg|\\
&\leq 2 E\sup_{\tau\in[t,T]}\bigg|\int_t^{\tau} \!\!\!2\left\langle I^m u(s),\,I^m(\xi-Du\theta)(s)\,dW_s\right\rangle\bigg|\\
&\leq C \bigg(E\sum_{k=1}^{d_1}\int_t^T\left(|\langle I^mu(s),\,I^m\xi^k(s)\rangle|^2 +|\langle I^mu(s),\,(M_kI^m+[I^m,M_k])u(s)\rangle|^2\right)ds\bigg)^{1/2}\\
&\leq C \bigg(E\int_t^T
\left(\|I^mu(s)\|^2\|I^m\xi(s)\|^2+\|I^mu(s)\|^4\right)ds
\bigg)^{1/2	}
\\
&\leq \eps E\sup_{s\in[t,T]}\|I^mu(s)\|^2 +C_{\eps} E\int_t^T
\left(\|I^m\xi(s)\|^2+\|I^mu(s)\|^2\right)ds,\quad \forall\,\eps\in(0,1),
\end{align*}
which together with \eqref{eq-prop-ito}, \eqref{eq-rela-u}, \eqref{eq-relat-uxi} and \eqref{eq-prf-0} implies \eqref{estim-aprioi-prop}.

\end{proof}

An immediate consequence of Proposition \ref{prop-apriori-estm} is 
\begin{cor}\label{cor-uniqn}
Let assumption $(\cA 1)$ hold. For $(f,g,G)\in \cL^2(H^m)\times \cL^2((H^m)^{d_1}) \times L^2(\Omega,\sF_T;H^{m})$ with $m\in \bR$, the solution of BSPDE \eqref{BSPDE-D} is unique.
\end{cor}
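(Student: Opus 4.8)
The plan is to read off uniqueness from the linearity of BSPDE \eqref{BSPDE-D} together with the a priori estimate of Proposition \ref{prop-apriori-estm}. Suppose $(u_1,v_1)$ and $(u_2,v_2)$ are two solutions of \eqref{BSPDE-D} associated with the same data $(f,g,G)$ and the same $\delta\geq 0$. Since every coefficient in \eqref{BSPDE-D} enters linearly, the difference $(u,v):=(u_1-u_2,\,v_1-v_2)$ is again a solution of \eqref{BSPDE-D}, but now with vanishing free terms $f\equiv 0$, $g\equiv 0$ and zero terminal value $G\equiv 0$. The whole matter then reduces to showing that the only solution of the homogeneous equation is the trivial one.

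The one point that needs a word of care is that Proposition \ref{prop-apriori-estm} is stated for solutions enjoying the extra regularity $(u,v)\in\big(\cS^2(H^{m+1})\cap\cL^2(H^{m+2})\big)\times\cL^2((H^{m+1})^{d_1})$, whereas Definition \ref{defn-solution} only asks for $(u,v)\in\cS^2(H^{m})\times\cL^2(H^{m-1})$. To bridge this, I would use the trivial Sobolev embeddings in the space variable: $\cS^2(H^{m})\hookrightarrow \cS^2(H^{m-1})\cap\cL^2(H^{m})$ and $\cL^2(H^{m-1})=\cL^2(H^{(m-2)+1})$, so that a solution at level $m$ in the sense of Definition \ref{defn-solution} automatically satisfies the hypotheses of Proposition \ref{prop-apriori-estm} at level $m-2$; the data $(0,0,0)$ lies in the corresponding spaces at level $m-2$ as well. (If one already restricts attention to solutions in the regularity class of Proposition \ref{prop-apriori-estm}, this step is vacuous.)

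Now apply estimate \eqref{estim-aprioi-prop} to $(u,v)$ with $m$ replaced by $m-2$. The right-hand side is $C\big\{E\|0\|_{m-2}^2+E\int_0^T(\|0\|_{m-2}^2+\|0\|_{m-2}^2)\,ds\big\}=0$, hence the left-hand side vanishes as well; in particular $E\sup_{t\in[0,T]}\|u(t)\|_{m-2}^2=0$, so $u\equiv 0$ in $\cS^2(H^{m-2})$, and therefore a.s. in $\cS^2(H^m)$. Feeding $Du\theta\equiv 0$ back into the remaining term of \eqref{estim-aprioi-prop} gives $E\int_0^T\|v(t)+Du(t)\theta(t)\|_{m-2}^2\,dt=E\int_0^T\|v(t)\|_{m-2}^2\,dt=0$, whence $v\equiv 0$. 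Thus $(u_1,v_1)=(u_2,v_2)$, which is the claim.

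I do not expect a genuine obstacle here: the argument is routine once Proposition \ref{prop-apriori-estm} is in hand, the only mild subtlety being the bookkeeping of Sobolev indices between the solution concept of Definition \ref{defn-solution} and the hypotheses of the a priori estimate, which is dispatched by monotonicity of the spaces $H^n$ in $n$.
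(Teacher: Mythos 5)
Your proposal is correct and follows the same route the paper intends: the corollary is stated there as an ``immediate consequence'' of Proposition \ref{prop-apriori-estm}, i.e.\ linearity reduces uniqueness to the homogeneous equation, to which the a priori estimate with zero data applies. Your extra bookkeeping --- shifting the Sobolev index down to $m-2$ so that a solution in the sense of Definition \ref{defn-solution} meets the regularity hypotheses of the proposition --- is a sensible way to make precise what the paper leaves implicit, and the index arithmetic checks out.
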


\begin{thm}\label{thm-BSPDE}
Let assumption $(\cA 1)$ hold. Given $(f,g,G)\in \cL^2(H^m)\times \cL^2((H^m)^{d_1}) \times L^2(\Omega,\sF_T;H^{m})$ with $m\in \bR$, BSPDE \eqref{BSPDE-D-M} (equivalently, BSPDE  \eqref{BSPDE-D} with $\delta=0$) admits a unique solution $(u,v)\in \cS^2(H^{m})\times\cL^2(H^{m-1})$ with $(L_ku,v^k+M_ku)\in\cL^2(H^m)\times\cL^2(H^m)$, $k=1,\dots,d_1$, and
\begin{align}
&E\sup_{t\in[0,T]}\|u(t)\|_m^2+E\int_{0}^T\left(\sum_{k=1}^{d_1}\|L_ku(t)\|_m^2+\|v(t)+Du(t)\theta(t)\|_m^2\right)dt
\nonumber\\
&\leq C\left\{
E\|G\|_m^2 + E\int_{0}^T \left(\|f(s)\|_m^2+\|g(s)\|_m^2\right)\,ds\right\},\label{estim-thm}
\end{align}
with $C$ depending on $T,m,\sigma,\theta,\gamma,b$ and $c$.
\end{thm}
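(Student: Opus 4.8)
The plan is to establish existence via a vanishing-viscosity argument combined with the a priori estimate of Proposition~\ref{prop-apriori-estm}, and then to upgrade the regularity of the resulting solution through a duality/mollification scheme so as to remove the restriction $m\in\bN^+$ found in the existing literature. First I would treat the case $m\in\bN^+$ large: for $\delta>0$, BSPDE~\eqref{BSPDE-D} is (uniformly) non-degenerate because of the $\delta\Delta u$ term, so the classical $L^2$-theory of super-parabolic BSPDEs (e.g. \cite{DuTangZhang-2013,Hu_Ma_Yong02}) yields a unique solution $(u_\delta,v_\delta)\in\left(\cS^2(H^{m+1})\cap\cL^2(H^{m+2})\right)\times\cL^2((H^{m+1})^{d_1})$, provided $L_kg^k\in\cL^2(H^m)$; since we only assume $g\in\cL^2((H^m)^{d_1})$ we first run this for the shifted index, solving in $H^{m-1}$, or equivalently apply the known theory at a level where $g^k$ has one more derivative and then note that the estimate~\eqref{estim-aprioi-prop} is uniform in $\delta$. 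The key point is that Proposition~\ref{prop-apriori-estm} gives a bound on $(u_\delta,v_\delta+Du_\delta\theta)$ in $\cS^2(H^m)\times\cL^2(H^m)$ (together with $L_ku_\delta$ in $\cL^2(H^m)$) that does not blow up as $\delta\downarrow 0$.

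Next I would pass to the limit $\delta\downarrow 0$. Extract a weakly convergent subsequence $u_\delta\rightharpoonup u$ in $\cL^2(H^m)$ (and weak-$*$ in $L^2(\Omega;L^\infty(0,T;H^m))$), $L_ku_\delta\rightharpoonup L_ku$ in $\cL^2(H^m)$, and $v_\delta+Du_\delta\theta\rightharpoonup \xi$ in $\cL^2(H^m)$; set $v:=\xi-Du\theta$. One checks that $\delta\Delta u_\delta\to 0$ in, say, $\cL^2(H^{m-2})$ because $\delta\|Du_\delta\|_m^2$ is bounded so $\sqrt{\delta}\,\|\Delta u_\delta\|_{m-1}$ is bounded, hence $\delta\Delta u_\delta$ is $O(\sqrt\delta)$ in $\cL^2(H^{m-1})$. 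Then the weak form in Definition~\ref{defn-solution} passes to the limit term by term: the linear terms are immediate from weak convergence, and the stochastic integral term converges because $v_\delta\rightharpoonup v$ in $\cL^2(H^{m-1})$ and the It\^o integral against a fixed test function is a continuous linear functional. This produces a solution $(u,v)\in\cS^2(H^m)\times\cL^2(H^{m-1})$ with the stated extra regularity, and lower semicontinuity of norms under weak convergence transfers the bound~\eqref{estim-aprioi-prop} (with $\delta=0$) to~\eqref{estim-thm}. Uniqueness is already given by Corollary~\ref{cor-uniqn}.

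The main obstacle is the gap between the hypothesis $g\in\cL^2((H^m)^{d_1})$ and the requirement $L_kg^k\in\cL^2(H^m)$ that the classical theory needs, for arbitrary real $m$. To handle this I would, for fixed $\delta>0$, first regularize $g$ by convolution to $g^{(n)}\in\cL^2((H^{m+1})^{d_1})$ and $f,G$ similarly, apply the known super-parabolic BSPDE theory at integer level (after a Bessel-potential change of variables $w=I^{m-m_0}u$ turning the $H^m$-problem into an $H^{m_0}$-problem with modified but still $\cL^\infty(C_b^\infty)$-coefficients — here Lemma~\ref{lem-pdo}(i) on commutators $[I^{m-m_0},L_k]\in\Psi_{m-m_0}$ is what keeps the transformed coefficients admissible), obtain $(u^{(n)}_\delta,v^{(n)}_\delta)$, use Proposition~\ref{prop-apriori-estm} (whose proof only used $g\in\cL^2(H^m)$, not $L_kg\in\cL^2(H^m)$!) to get $n$-uniform bounds, and pass $n\to\infty$ first, then $\delta\to 0$. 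In fact the cleanest route is to observe that Proposition~\ref{prop-apriori-estm} is stated and proved exactly at the regularity level we want, so the whole existence proof is: mollify to enter the classical framework, apply the $\delta$-viscosity existence there, invoke the $\delta$- and mollification-uniform a priori estimate, and take weak limits. I expect the bookkeeping of the $I^{m-m_0}$-conjugation (verifying the transformed lower-order coefficients still lie in $\cL^\infty(C_b^\infty)$ and that $M_kv^k$ transforms correctly) to be the only genuinely delicate part; everything else is routine functional analysis.
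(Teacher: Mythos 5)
Your proposal follows essentially the same route as the paper: mollify the data into a smooth class, solve the non-degenerate ($\delta>0$) equation by the existing theory, invoke the $\delta$- and mollification-uniform a priori estimate of Proposition~\ref{prop-apriori-estm}, pass to the limit, and get uniqueness from Corollary~\ref{cor-uniqn}. The one substantive difference is how the limit is taken: you extract weak limits and use lower semicontinuity, whereas the paper exploits linearity to apply Proposition~\ref{prop-apriori-estm} to differences of approximate solutions (the difference solves the same equation with data given by the difference of the data plus a vanishing forcing term $(\delta_l-\delta_{l'})\Delta u_{l,n}$), obtaining a Cauchy sequence in $\cS^2(H^{m+2})\times\cL^2((H^{m+2})^{d_1})$ and hence strong convergence. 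The latter is worth adopting: a weak-$*$ limit in $L^2(\Omega;L^\infty(0,T;H^m))$ does not by itself yield the pathwise time-continuity of $u$ with values in $H^m$ that membership in $\cS^2(H^m)$ requires, and repairing this after a purely weak passage would need an extra argument (e.g.\ weak continuity plus an energy identity), while with strong convergence it is automatic since $\cS^2(H^m)$ is complete. Your $I^{m-m_0}$-conjugation to reduce real $m$ to an integer level is a reasonable substitute for the paper's direct appeal to the $L^p$-theory of \cite{DuQiuTang10} at the level $H^{m+5}$; as you note, the commutator bookkeeping via Lemma~\ref{lem-pdo} keeps the transformed coefficients admissible, so this step is sound.
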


\begin{proof}
We use the method of approximation. Choose $\{\delta_l\}_{l\in\bN^+}\subset(0,1)$ and 
$$
\{(f_n,g_n,G_n)\}_{n\in\bN^+}\subset \cL^2(H^{m+5})\times \cL^2((H^{m+5})^{d_1})\times L^2(\Omega,\sF_T;H^{m+5})
$$
 such that $\delta_l$ converges down to $0$ and $(f_n,g_n,G_n)$ converges to $(f,g,G)$ in $\cL^2(H^m)\times\cL^2((H^{m})^{d_1})\times L^2(\Omega,\sF_T;H^m)$. By the $L^p$-theory of BSPDEs (see \cite{DuQiuTang10} for instance), BSPDE \eqref{BSPDE-D} admits a unique solution $(u_{l,n},v_{l,n})\in \left(\cS^2(H^{m+5})\cap \cL^2(H^{m+6})\right)\times \cL^2(H^{m+5}) $ associated with $(\delta_l,f_n,g_n,G_n)$. 
 
For each $n$, it follows from Proposition \ref{prop-apriori-estm} that  $\{(u_{l,n},L_ku_{l,n},v_{l,n}+Du_{l,n}\theta)\}_{l\in\bN^+}$ is bounded in $\cS^2(H^{m+4})\times \cL^2(H^{m+4})\times\cL^2((H^{m+4})^{d_1})$, $k=1,\dots,d_1$. 
 Notice that $\delta_l\Delta u_{l,n}$ tends to zero in $\cL^2(H^{m+2})$ as $l$ goes to infinity. Therefore, letting $l$ tend to infinity, from Proposition \ref{prop-apriori-estm} and Corollary \ref{cor-uniqn} we derive the unique solution $(u_n,v_n)$ for BSPDE \eqref{BSPDE-D} associated with $(f_n,g_n,G_n)$ and $\delta=0$ such that $(u_n,L_ku_n,v_n+Du_n\theta)\in\cS^{2}(H^{m+2})\times \cL^2(H^{m+2})\times\cL^2((H^{m+2})^{d_1})$ for $k=1,\dots,d_1$. 
 
 Furthermore, letting $n$ go to infinity, again by Proposition \ref{prop-apriori-estm} and Corollary \ref{cor-uniqn}, one obtains the unique solution $(u,v)$ and associated estimates. This completes the proof.
\end{proof}

\begin{rmk}
 Like in \cite{DuTangZhang-2013,Hu_Ma_Yong02}, the random field $v+Du\theta\in \cL^2((H^m)^{d_1})$ is estimated as a unity which appears in the corresponding BSDE (see \eqref{BSDE-relatn} below for instance), and thus we only have $v\in \cL^2(H^{m-1})$ (see Example \ref{exam}). In fact, if we further have $\sigma\sigma^{\mathcal{T}}\geq \theta\theta^{\mathcal{T}}$, then $Du\theta\in\cL^2(H^{m})$ and thus $v\in\cL^2(H^{m})$, as $u,L_ku\in\cL^2(H^m)$, $k=1,\dots,d_1$. In addition, in view of (ii) of Lemma \ref{lem-pdo} and the proofs involved in this section, the required  regularity for the coefficients $b$, $c$, $\sigma$, $\theta$ and $\gamma$ can be relaxed like in  \cite{DuTangZhang-2013,Hu_Ma_Yong02}, but we would not seek such a generality in the present paper.   
\end{rmk}
\section{H\"ormander-type theorem}
\label{sec:proof-main-thm}
Recall that $\eta=2^{-n_0}$. Basing on the $L^2$-theory of SPDEs presented in the preceding section, we derive the following H\"ormander-type theorem.
\begin{thm}\label{thm-hormand}
Let assumptions $(\cA 1)$ and $(\cH)$ hold. Suppose that 
$$(f,g)\in \cap_{n\in\bR}\left(\cL^2(H^n)\times\cL^2((H^n)^{d_1})\right)\quad \text{and}\quad G\in L^2(\Omega;H^m) \quad\text{for some }m\in\bR.$$ For the unique solution $(u,v)$ of BSPDE \eqref{BSPDE-D-M} in Theorem \ref{thm-BSPDE},  we have for any $\eps\in (0,T)$
$$(u,v)\in\cap_{n\in\bR} L^2(\Omega;C([0,T-\eps];H^n))\times L^2(\Omega;L^2(0,T-\eps;H^{n-1})),$$
and for any $n\in\bR$
 \begin{align}
 &E\sup_{t\in[0,T-\eps]}\| u(t)\|_{n}^2
+E\int_{0}^{T-\eps}\left(\| u(t)\|_{n+\eta}^2+
\| v(t)+D u(t)\theta(t)\|_{n}^2\right)dt
\nonumber\\
&\leq C\left\{ E\|G\|_m^2+ E\int_{0}^{T} \left(\|f(s)\|_{n}^2+ \|g(s)\|_{n}^2  \right)\,ds\right\},\label{est-hmd-thm}
 \end{align}
 with the constant $C$ depending on $\eps,T,n,m,n_0,\sigma,\theta,\gamma,b$ and $c$.
 In particular, the random field $u(t,x)$ is almost surely infinitely differentiable with respect to $x$ on $[0,T)\times\bR^d$ and each derivative is a continuous function on $[0,T)\times\bR^d$.	
\end{thm}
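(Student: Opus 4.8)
\textbf{Proof proposal for Theorem \ref{thm-hormand}.}

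The plan is to bootstrap regularity using the extra $\eta$-smoothing estimate \eqref{estim-eta-thm} from part (ii) of Theorem \ref{thm-main} (equivalently, the main gradient estimate of Theorem \ref{thm-BSPDE} combined with the H\"ormander condition $(\cH)$), iterated over a shrinking sequence of time intervals. First I would fix $n\in\bR$ and a target interval $[0,T-\eps]$, and observe that by Theorem \ref{thm-BSPDE} we already know $(u,v)\in\cS^2(H^m)\times\cL^2(H^{m-1})$ with $L_ku, v^k+M_ku\in\cL^2(H^m)$ on all of $[0,T]$. The key mechanism is that, thanks to $(\cH)$, every $D_i$ is a finite $\cL^\infty(C_b^\infty)$-combination of the vector fields in $\bV_{n_0}$, i.e. of iterated Lie brackets of the $L_k$; so controlling $\|L_ku\|_m$ (and, recursively, brackets applied to $u$) upgrades to controlling $\|u\|_{m+\eta}$ on any subinterval, as asserted in \eqref{estim-eta-thm}. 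I would state a clean one-step lemma: for any $s<t\le T$ and any real $\ell$, if $(u,v)$ restricted to $[s,t]$ lies in the class of Theorem \ref{thm-BSPDE} with terminal datum $u(t)\in L^2(\Omega,\sF_t;H^\ell)$ and $(f,g)\in\cap_n(\cL^2(H^n)\times\cL^2((H^n)^{d_1}))$, then on $[s',t]$ for any $s'>s$ one has $u\in\cL^2(H^{\ell+\eta})$ with the corresponding norm bound, where the constant blows up as $s'\downarrow s$.

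The main obstacle — and the reason the interval must shrink — is that the BSPDE is backward: the ``initial'' condition is at the terminal time, so regularity must be propagated from $T$ into the interior, and the smoothing constant degenerates near $T$. To handle this I would choose a partition $T-\eps = t_0 < t_1 < \cdots < t_N = T$ with, say, $N = \lceil (m_0 - n)_+/\eta\rceil + 1$ steps (where $m_0$ is the starting Sobolev index $m$ from the hypothesis on $G$), or more precisely enough steps that $m + N\eta \ge n$. Running the one-step lemma on $[t_{N-1},t_N]$ with terminal datum $G\in H^m$ gives $u(t_{N-1})\in H^{m+\eta}$ (for a.e.\ choice of $t_{N-1}$, after a standard argument picking a good time level, or by working with the energy estimate directly on a slightly larger window); feeding this as the terminal datum on $[t_{N-2},t_{N-1}]$ gives $u(t_{N-2})\in H^{m+2\eta}$; after $N$ steps $u(t_0)=u(T-\eps)\in H^{m+N\eta}\subset H^n$. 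The forcing terms $f$ and $L_kg^k$ never obstruct this because $(f,g)$ is assumed to lie in every $\cL^2(H^n)$. A technical point I would address carefully is the measurability/adaptedness of the intermediate terminal data $u(t_i)$ — each $u(t_i)$ is $\sF_{t_i}$-measurable since $u\in\cS^2$, so each sub-problem is again a well-posed BSPDE of the form \eqref{BSPDE-D-M} on $[t_{i-1},t_i]$ and Theorem \ref{thm-BSPDE} applies verbatim.

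Once $u\in\cap_n\cL^2(0,T-\eps;H^n)$ is established, I would close by a final application of Theorem \ref{thm-BSPDE}/Proposition \ref{prop-apriori-estm} on $[0,T-\eps]$ with terminal datum $u(T-\eps)\in\cap_n H^n$ to obtain the $\cS^2$-in-time bound, the gradient term $\|v+Du\theta\|_n$, and the displayed estimate \eqref{est-hmd-thm}; the constant's dependence on $\eps$ enters through the product of the $N$ one-step smoothing constants, each of which depends on the lengths $t_i - t_{i-1}$, and one fixes the partition so these depend only on $\eps, T, n, m, n_0$ and the coefficient bounds. Finally, the pointwise regularity conclusion follows from Sobolev embedding: $u\in\cap_n L^2(\Omega;C([0,T-\eps];H^n))$ and, for $n$ large, $H^n\hookrightarrow C^k_b$ for every $k$, so for a.e.\ $\omega$ the field $u(\cdot,\cdot,\omega)$ and all its spatial derivatives are continuous on $[0,T-\eps]\times\bR^d$; since $\eps\in(0,T)$ is arbitrary this gives smoothness on $[0,T)\times\bR^d$. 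I expect the bootstrap/partition argument with control of the accumulating constants to be the only genuinely delicate part; everything else is an invocation of the $L^2$-theory already proved.
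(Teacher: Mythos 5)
Your bootstrap is sound and reaches the same conclusion, but the localization mechanism is genuinely different from the paper's. You cut $[T-\eps,T]$ into subintervals, use the time-integrated gain $u\in\cL^2(H^{m+\eta})$ (Corollary \ref{cor-regularity}, which in fact holds on all of $[0,T]$ with no shrinking) to select, via Chebyshev, a ``good'' time level $t_{N-1}$ at which $u(t_{N-1})\in L^2(\Omega,\sF_{t_{N-1}};H^{m+\eta})$ with a quantitative bound, and then restart the BSPDE from that level; this requires identifying the restriction of $(u,v)$ with the solution of the sub-problem (uniqueness across regularity classes, via Corollary \ref{cor-uniqn}) and costs a factor proportional to the reciprocal of the window length at each step. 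The paper instead sets $(\bar u,\bar v)=(T-t)(u,v)$, which solves a BSPDE of the same form \eqref{BSPDE-1} with terminal datum $0$ and the extra source term $u$; since this terminal datum is perfectly regular, Theorem \ref{thm-BSPDE} plus the Lie-bracket Lemma \ref{lem-lie-bracket} yield $\bar u\in\cL^2(0,T;H^{m+\eta})$ on the \emph{whole} interval, and dividing by $T-t\ge \eps/2$ on $[0,T-\eps_1]$ transfers the gain to $u$; iterating with $\eps_j=\sum_{i\le j}\eps\,2^{-i}$ gives \eqref{est-hmd-thm}. The weight trick buys the $\sup_t$ bounds directly and avoids both the a.e.-selection of time levels and the restriction/uniqueness bookkeeping; your route is more elementary but needs those two points spelled out. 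Two small slips to correct: in your one-step lemma the constant should degenerate as the subinterval approaches the \emph{terminal} time (the datum end of a backward equation), not as $s'\downarrow s$ --- as stated, the time-integrated $\eta$-gain needs no shrinking at all, only the pointwise-in-time statement does; and the step count should satisfy $N\eta\ge (n-m)_+$, not involve $(m_0-n)_+$, which you correct yourself parenthetically.
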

 Because of the appearance of the stochastic integral in BSPDE \eqref{BSPDE-D-M}, we do not investigate the time-differentiability of $u(t,x)$ and the coefficients herein is only required to be measurable with respect to the time variable, while in the classical H\"ormander theorem, the associated coefficients are smooth and the function $u(t,x)$ turns out to be deterministic and smooth with respect to the time variable.

Before the proof of Theorem \ref{thm-hormand}, we first give an estimate on the Lie bracket.
 
\begin{lem}\label{lem-lie-bracket}
For $\{\widetilde{L},L\}\subset\cup_{l\geq 0} \bV_l$, $m\in\bR$ and $\eps\in[0,1]$, there exists a positive constant $C$ such that almost surely for any $\phi\in H^{m}$ with $\widetilde{L}\phi\in H^{m-1+\eps}$ and $L\phi\in H^m$,
\begin{align*}
\|[\widetilde{L},L]\phi\|_{m-1+\frac{\eps}{2}}
\leq C\left(
\|\widetilde{L}\phi\|_{m-1+\eps}+\|L\phi\|_{m}+\|\phi\|_m
\right).
\end{align*}
\end{lem}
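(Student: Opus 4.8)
The plan is to analyze the pseudo-differential order of the operator $[\widetilde L, L]$ directly via Lemma \ref{lem-pdo}(i). Both $\widetilde L$ and $L$ lie in $\cup_{l\geq 0}\bV_l$, hence are first-order operators of the form $\zeta^i D_i$ with $\zeta^i\in\cL^{\infty}(C_b^{\infty})$ (this is precisely why the brackets were iterated to build $\bV_{l+1}$ from $\bV_l$ with $C_b^\infty$-coefficients); so by Lemma \ref{lem-pdo}(i), $[\widetilde L, L]\in\Psi_1$. The naive estimate would then give $\|[\widetilde L,L]\phi\|_{m-1}\leq C\|\phi\|_m$, which is too weak: the gain of $\eps/2$ in the target norm must come from the a priori knowledge that $\widetilde L\phi\in H^{m-1+\eps}$. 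So the first step is to write $[\widetilde L, L] = \widetilde L L - L\widetilde L$ and split the two terms asymmetrically: in $\widetilde L(L\phi)$ the inner factor $L\phi$ is only known to lie in $H^m$, so $\widetilde L$ acting as a $\Psi_1$-operator lands it in $H^{m-1}$, controlled by $\|L\phi\|_m$; in $L(\widetilde L\phi)$ the inner factor $\widetilde L\phi$ lies in $H^{m-1+\eps}$, so $L\in\Psi_1$ lands it in $H^{m-2+\eps}\subset H^{m-1}$, controlled by $\|\widetilde L\phi\|_{m-1+\eps}$. This already gives the estimate in norm $\|\cdot\|_{m-1}$; the point is to upgrade $m-1$ to $m-1+\eps/2$.

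The key step is an interpolation argument. I would interpolate the operator $[\widetilde L, L]$, viewed as acting on $\phi$, between two mapping properties:
\begin{itemize}
\end{itemize}
rather, between the bound just obtained and a second bound obtained by moving one derivative onto $\phi$ itself using that $[\widetilde L,L]\in\Psi_1$ applied to $\phi\in H^m$ gives $H^{m-1}$. More precisely: since $I^{\eps/2}[\widetilde L,L]I^{-\eps/2}\in\Psi_1$ as well (conjugation by $I^{\pm\eps/2}\in\Psi_{\pm\eps/2}$ preserves order-$1$ modulo lower order, by Lemma \ref{lem-pdo}(i)), one has, with $\psi:=I^{\eps/2}\phi$, an identity relating $\|[\widetilde L,L]\phi\|_{m-1+\eps/2}$ to $\|[\widetilde L_1,L_1]\psi\|_{m-1}$ plus genuinely lower-order (hence $\|\phi\|_m$-controlled) terms, where $\widetilde L_1,L_1$ are the conjugated operators. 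The residual commutators $[I^{\eps/2},\widetilde L]$ and $[I^{\eps/2},L]$ lie in $\Psi_{\eps/2}$, so applied to $\phi\in H^m$ they contribute $\|\phi\|_{m-1+\eps/2}\leq\|\phi\|_m$. I would then re-run the asymmetric splitting of the preceding paragraph for the conjugated bracket acting on $\psi$, using that $\widetilde L\phi\in H^{m-1+\eps}$ translates (again modulo $\Psi_{\eps/2-1}$ errors on $\phi$) into $\widetilde L_1\psi\in H^{m-1+\eps/2}$ — which is exactly the right amount to make the $L(\widetilde L\phi)$-type term land in $H^{m-1+\eps/2-1}\subset H^{m-1}$ with the norm $\|\widetilde L\phi\|_{m-1+\eps}$, while the $\widetilde L(L\phi)$-type term lands in $H^{m-1}$ with norm $\|L\phi\|_m$.

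The main obstacle is bookkeeping the commutator errors: every time $I^{\eps/2}$ is slid past one of the first-order operators $\widetilde L$, $L$ (or past the multiplication coefficients inside them), a $\Psi_{\eps/2-1}$ operator is generated, and one must check that each such error, applied to the function at hand, is dominated by one of the three allowed terms $\|\widetilde L\phi\|_{m-1+\eps}$, $\|L\phi\|_m$, $\|\phi\|_m$ — in practice always by $\|\phi\|_m$ since $\eps/2-1\leq -1/2<0$. A secondary technical point is that Lemma \ref{lem-pdo}(ii) only gives boundedness of multiplication by $C_b^{\infty}$ on $H^n$ for $|n|$ in a finite range, but since all coefficients are in $C_b^\infty$ that range is all of $\bR$, so this causes no trouble. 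Once the error terms are absorbed, the estimate follows by collecting constants. I would also note that the cases $\eps=0$ and $\eps=1$ are consistent (at $\eps=0$ the hypothesis $\widetilde L\phi\in H^{m-1}$ adds nothing beyond $\phi\in H^m$ and the claim reduces to $[\widetilde L,L]\in\Psi_1$; at $\eps=1$ one gets the half-derivative gain that will be iterated in the proof of Theorem \ref{thm-hormand}).
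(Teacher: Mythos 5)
Your plan does not close, and the failure is precisely at the step that is supposed to produce the gain of $\eps/2$. First, the Sobolev inclusions you invoke are reversed: for Bessel-potential spaces $H^{a}\subset H^{b}$ requires $a\ge b$. Your asymmetric splitting places $L(\widetilde L\phi)$ in $H^{m-2+\eps}$, and $H^{m-2+\eps}\subset H^{m-1}$ only when $\eps\ge 1$, so the splitting does \emph{not} yield the $\|\cdot\|_{m-1}$ bound (that bound does hold, but only because $[\widetilde L,L]\in\Psi_1$ acts on $\phi\in H^{m}$, which uses none of the extra hypotheses). Likewise, in the conjugated step you need $H^{m-2+\eps/2}\subset H^{m-1}$, which would require $\eps\ge 2$. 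Second, and more fundamentally, conjugation by $I^{\eps/2}$ gains nothing: with $\psi=I^{\eps/2}\phi$ you only know $\psi\in H^{m-\eps/2}$, $L_1\psi$ controlled in $H^{m-\eps/2}$ and $\widetilde L_1\psi$ controlled in $H^{m-1+\eps/2}$, so bounding $\|[\widetilde L_1,L_1]\psi\|_{m-1}$ is exactly the original problem with $(m,\eps)$ shifted to $(m-\eps/2,\eps)$; the problem is self-similar under your reduction. Any argument that only composes operator mapping properties loses a full derivative each time a first-order operator is applied and therefore cannot see the half-gain.

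The improvement by $\eps/2$ is a bilinear, subelliptic-type phenomenon, and the paper obtains it by squaring the norm. Setting $A^{n}=I^{n-1}[\widetilde L,L]\in\Psi_{n}$, one writes $\|[\widetilde L,L]\phi\|_{m-1+\eps/2}^{2}=\langle[\widetilde L,L]\phi,\,I^{m}A^{m-1+\eps}\phi\rangle$, so that the total smoothing weight $2(m-1)+\eps$ sits between \emph{two} copies of $[\widetilde L,L]\phi$ and can be distributed unevenly ($m$ on one side, $m-1+\eps$ minus one derivative on the other). Splitting $[\widetilde L,L]=\widetilde LL-L\widetilde L$ in the left slot and moving one first-order factor across the pairing via its adjoint ($\widetilde L^{*}=-\widetilde L+\tilde c$, $L^{*}=-L+\bar c$), each of the two resulting terms is bounded by Cauchy--Schwarz by products of $\|L\phi\|_{m}$, $\|\widetilde L\phi\|_{m-1+\eps}$ and $\|\phi\|_{m}$, with all residual commutators handled by Lemma \ref{lem-pdo}. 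That integration-by-parts step in the quadratic form is the missing idea; without it the half-derivative gain is out of reach.
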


\begin{proof}
Assume first $\phi\in H^{m+1}$. Setting $A^n=I^{n-1}[\widetilde{L},L]$, we have $A^n\in\Psi_{n}$ a.s., for each $n\in\bR$.  As the joint operators of $H$ and $L$, $\widetilde{L}^*=-\widetilde{L}+\tilde{c}$ and $L^*=-L+\bar{c}$ with $\tilde{c},\bar{c} \in \cL^{\infty}(C^{\infty}_b)$ respectively. By Lemma \ref{lem-pdo}, one has
\begin{align*}
&\langle \widetilde{L}L\phi,\,I^mA^{m-1+\eps}\phi \rangle\\
&=\langle L\phi,\,(I^m\widetilde{L}^*+[\widetilde{L}^*,I^m])A^{m-1+\eps}\phi\rangle\\
&=\langle I^mL\phi,\,(A^{m-1+\eps}\widetilde{L}^*+[\widetilde{L}^*,A^{m-1+\eps}])\phi  \rangle 
+\langle [I^m,\widetilde{L}]L\phi,\,A^{m-1+\eps}\phi\rangle\\
&\leq C\left( \|L\phi\|_m^2+ \|\widetilde{L}\phi\|^2_{m-1+\eps}+\|\phi\|_m^2   \right)
\end{align*}
and 
\begin{align*}
&\langle \widetilde{L}\phi,\,I^mA^{m-1+\eps}\phi \rangle\\
&=\langle \widetilde{L}\phi,\,(I^{m-1+\eps}L^*+[L^*,I^{m-1+\eps}])A^m\phi\rangle\\
&=\langle I^{m-1+\eps}\widetilde{L}\phi,\,(A^mL^*+[L^*,A^m])\phi\rangle
+\langle I^{m-1+\eps}\widetilde{L}\phi,\, I^{-(m-1+\eps)}[L^*,I^{m-1+\eps}]A^m\phi\rangle\\
&\leq
C\left(\|\widetilde{L}\phi\|_{m-1+\eps}^2+\|L\phi\|_m^2+\|\phi\|_m^2 \right).
\end{align*}
Hence, 
\begin{align*}
\|[\widetilde{L},L]\phi\|_{m-1+\frac{\eps}{2}}
=\langle [\widetilde{L},L]\phi,\,I^mA^{m-1+\eps}\phi \rangle^{\frac{1}{2}}
\leq C\left(\|\widetilde{L}\phi\|_{m-1+\eps}+\|L\phi\|_m+\|\phi\|_m \right).
\end{align*}
Through standard density arguments, one verifies that the above estimate also holds for any $\phi\in H^{m}$ with $\widetilde{L}\phi\in H^{m-1+\eps}$ and $L\phi\in H^m$.  
\end{proof}

Starting from estimate \eqref{estim-thm} of Theorem \ref{thm-BSPDE}, applying Lemma \ref{lem-lie-bracket} iteratively  to elements of  $\bV_0,\dots,\bV_{n_0}$, we have
\begin{cor}\label{cor-regularity}
Assume the same hypothesis as in Theorem \ref{thm-BSPDE}. Let condition $(\cH)$ hold. For the unique solution $(u,v)$ of BSPDE \eqref{BSPDE-D-M}, one has further  $u\in \cL^2(H^{m+\eta})$ with
\begin{align}
E\int_{0}^T\|u(t)\|_{m+\eta}^2dt
\leq C\left\{
E\|G\|_m^2 + E\int_{0}^T \left(\|f(s)\|_m^2+\|g(s)\|_m^2\right)\,ds\right\},\label{estim-eta-thm}
\end{align}
where the constant $C$ depends on $T,m,n_0,\sigma,\theta,b,c$ and $\gamma$.
\end{cor}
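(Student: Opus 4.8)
The plan is to establish, by induction on $l=0,1,\dots,n_0$, the assertion $(\star_l)$: for every $V\in\bV_l$ one has $Vu\in\cL^2(H^{m-1+2^{-l}})$ together with a bound
\begin{align*}
E\int_0^T\|Vu(t)\|_{m-1+2^{-l}}^2\,dt\leq C\left\{E\|G\|_m^2+E\int_0^T\left(\|f(s)\|_m^2+\|g(s)\|_m^2\right)ds\right\},
\end{align*}
with $C$ depending only on $T,m,l$ and finitely many derivatives of $\sigma,\theta,b,c,\gamma$. The case $l=0$ is precisely Theorem \ref{thm-BSPDE}: since $\bV_0=\{L_1,\dots,L_{d_1}\}$ and $m-1+2^0=m$, estimate \eqref{estim-thm} gives $L_ku\in\cL^2(H^m)$ with the required bound; the same theorem records $u\in\cS^2(H^m)$, used below.

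For the inductive step I assume $(\star_l)$ and fix $V\in\bV_{l+1}$. If $V\in\bV_l$ then, since $2^{-l}\geq 2^{-(l+1)}$, $(\star_l)$ already gives $Vu\in\cL^2(H^{m-1+2^{-l}})\hookrightarrow\cL^2(H^{m-1+2^{-(l+1)}})$. Otherwise $V=[L_k,\widetilde V]$ with $k\in\{1,\dots,d_1\}$ and $\widetilde V\in\bV_l$, and I would apply Lemma \ref{lem-lie-bracket} pointwise in $(\omega,t)$, with its parameter $m$ equal to ours, $\eps=2^{-l}\in(0,1]$, $\widetilde L=\widetilde V$, $L=L_k$ and $\phi=u(\omega,t)$: the three hypotheses $u\in H^m$, $\widetilde Vu\in H^{m-1+2^{-l}}$, $L_ku\in H^m$ hold for a.e.\ $(\omega,t)$ by $(\star_l)$ and the case $l=0$, so the lemma yields, for a.e.\ $(\omega,t)$,
\begin{align*}
\|[L_k,\widetilde V]u\|_{m-1+2^{-(l+1)}}\leq C\left(\|\widetilde Vu\|_{m-1+2^{-l}}+\|L_ku\|_m+\|u\|_m\right).
\end{align*}
Squaring, integrating over $\Omega\times[0,T]$, and inserting $(\star_l)$ for $\widetilde V$ together with \eqref{estim-thm} for $\|L_ku\|_m$ and $\|u\|_m$ yields $(\star_{l+1})$ (the two cases exhaust $\bV_{l+1}$), with the constant multiplied by a factor depending only on the coefficients.

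At level $l=n_0$ every $V\in\bV_{n_0}$ satisfies $Vu\in\cL^2(H^{m-1+\eta})$, $\eta=2^{-n_0}$. Condition $(\cH)$ states $\{D_i:i=1,\dots,d\}\subset\bL_{n_0}$, i.e., each $D_i$ is a linear combination of elements of $\bV_{n_0}$ with $\cL^\infty(C_b^\infty)$ coefficients; since multiplication by a $C_b^\infty$ function is bounded on every $H^s$ (Lemma \ref{lem-pdo}(ii)), we get $D_iu\in\cL^2(H^{m-1+\eta})$ for $i=1,\dots,d$, with a bound of the same form. Finally I would invoke the elementary identity $\|\phi\|_{s+1}^2=\|\phi\|_s^2+\sum_{i=1}^d\|D_i\phi\|_s^2$ with $\phi=u$ and $s=m-1+\eta$; since $\eta\in(0,1]$ we have $s\leq m$, hence $u\in\cS^2(H^m)\subset\cL^2(H^s)$, and combining with $D_iu\in\cL^2(H^{m-1+\eta})$ produces $u\in\cL^2(H^{m+\eta})$ and the estimate \eqref{estim-eta-thm}.

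The main difficulty here is bookkeeping rather than analysis: one must track the regularity along the geometric sequence $m,\ m-\tfrac12,\ m-\tfrac34,\dots,\ m-1+2^{-n_0}$ forced by the halving of $\eps$ at each application of Lemma \ref{lem-lie-bracket} — it is exactly this halving (a bracket loses a full derivative but the lemma buys back only $\eps/2$) that pins the net gain at $\eta=2^{-n_0}$ and not more. One must also verify that the three structural hypotheses of Lemma \ref{lem-lie-bracket} genuinely hold at every node of $\bV_0\subset\cdots\subset\bV_{n_0}$, and that the constant, enlarged by a coefficient-dependent factor at each of the $n_0$ levels, depends in the end only on $T,m,n_0,\sigma,\theta,b,c,\gamma$.
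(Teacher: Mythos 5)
Your proposal is correct and is exactly the argument the paper intends: the paper's one-line proof ("applying Lemma \ref{lem-lie-bracket} iteratively to elements of $\bV_0,\dots,\bV_{n_0}$") is precisely your induction $(\star_l)$, with the base case supplied by the $L_ku\in\cL^2(H^m)$ estimate of Theorem \ref{thm-BSPDE}, the halving $\eps\mapsto\eps/2$ at each bracket, and the final passage from $D_iu\in\cL^2(H^{m-1+\eta})$ to $u\in\cL^2(H^{m+\eta})$ via condition $(\cH)$ and Lemma \ref{lem-pdo}(ii). Your bookkeeping of the regularity indices $m-1+2^{-l}$ and of the hypotheses of Lemma \ref{lem-lie-bracket} at each level is accurate.
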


We are now ready to present the proof of Theorem \ref{thm-hormand}.
\begin{proof}[Proof of Theorem \ref{thm-hormand}]
By Theorem \ref{thm-BSPDE}, BSPDE \eqref{BSPDE-D-M} admits a unique solution $(u,v)\in\cS^2(H^m)\times \cL^2(H^{m-1})$ and the pair of random fields $(\bar{u},\bar{v})(t,x):=(T-t)(u,v)(t,x)$ turns out to be the unique solution of BSPDE
\begin{equation}\label{BSPDE-1}
  \left\{\begin{array}{l}
  \begin{split}
  -d\bar{u}(t,x)=\,&\displaystyle \bigg[ \left( \frac{1}{2}L_k^2+\frac{1}{2}M_k^2\right)\bar{u}+M_k\bar{v}^k
         +{b}^jD_j\bar{u} + c\bar u+\gamma^l\bar v^l+(T-t)(f+L_kg^k)
         \\ &\displaystyle
          +u 
                \bigg](t,x)\, dt-\bar{v}^{r}(t,x)\, dW_{t}^{r};\\
    \bar{u}(T,x)=\, &0,
    \end{split}
  \end{array}\right.
\end{equation}
with 
\begin{align*}
&E\sup_{t\in[0,T]}\|\bar u(t)\|_m^2+E\int_{0}^T\left(\sum_{k=1}^{d_1}\|L_k\bar u(t)\|_m^2+\|\bar v(t)+D\bar u(t)\theta(t)\|_m^2\right)dt
\nonumber\\
&\leq C\left(T^2+1\right)E\int_{0}^T \left(\|f(s)\|_m^2+\|g(s)\|^2_m+\|u(s)\|_m^2\right)\,ds.
\end{align*}
Starting from the above estimate and applying Lemma \ref{lem-lie-bracket} iteratively  to elements of  $\bV_0,\dots,\bV_{n_0}$, we have 
\begin{align*}
\int_t^T\|D\bar{u}\|^2_{m-1+\eta}ds\leq 
C\left(T^2+1\right)E\int_{0}^T \left(\|f(s)\|_m^2+\|g(s)\|^2_m+\|{u}(s)\|_m^2\right)\,ds.
\end{align*}
 Fix any $\eps\in(0,T\wedge 1)$ and define $\eps_l=\sum_{i=1}^l\frac{\eps}{2^i}$. 
By interpolation and Theorem \ref{thm-BSPDE}, one gets 
\begin{align}
&E\sup_{t\in[0,T-\eps_1]}\| u(t)\|_m^2
+E\int_{0}^{T-\eps_1}\left(\| u(t)\|_{m+\eta}^2+
\| v(t)+D u(t)\theta(t)\|_m^2\right)dt
\nonumber\\
&\leq \frac{C2(T^2+1)}{\eps}E\int_{0}^{T} \left(\|f(s)\|_m^2+\|g(s)\|_m^2+\|u(s)\|_m^2\right)\,ds.
\end{align}

Noticing that $(f,g)\in \cap_{n\in\bR}\left(\cL^2(H^n)\times\cL^2((H^n)^{d_1})\right)$, by iteration we obtain for any $j\in\bN^+$,
\begin{align}
&E\sup_{t\in[0,T-\eps_j]}\| u(t)\|_{m+(j-1)\eta}^2
+E\int_{0}^{T-\eps_j}\left(\| u(t)\|_{m+j\eta}^2+
\| v(t)+D u(t)\theta(t)\|_{m+(j-1)\eta}^2\right)dt
\nonumber\\
&\leq \frac{C2^j(T^{2}+1)}{\eps}E\int_{0}^{T-\eps_{j-1}} \left(\|f(s)\|_{m+(j-1)\eta}^2+ \|g(s)\|_{m+(j-1)\eta}^2+\|u(s)\|_{m+(j-1)\eta}^2  \right)\,ds,
\end{align}
which, together with estimate \eqref{estim-thm}, implies by iteration that 
\begin{align}
&E\sup_{t\in[0,T-\eps_j]}\| u(t)\|_{m+(j-1)\eta}^2
+E\int_{0}^{T-\eps_j}\left(\| u(t)\|_{m+j\eta}^2+
\| v(t)+D u(t)\theta(t)\|_{m+(j-1)\eta}^2\right)dt
\nonumber\\
&\leq C_j\left\{ E\|G\|_m^2+ E\int_{0}^{T} \left(\|f(s)\|_{m+(j-1)\eta}^2+ \|g(s)\|_{m+(j-1)\eta}^2  \right)\,ds\right\}.\nonumber
\end{align}
Hence, we have
$$(u,v)\in\cap_{n\in\bR} L^2(\Omega;C([0,T-\eps];H^n))\times L^2(\Omega;L^2(0,T-\eps;H^{n-1})),\quad \forall\, \eps\in (0,T),$$
and there holds estimate \eqref{est-hmd-thm}. In particular, by Sobolev embedding theorem, the random field $u(t,x)$ is almost surely infinitely differentiable with respect to $x$ and each derivative is a continuous function on $[0,T)\times\bR^d$.

\end{proof}

At the end of this section, we would show the connection between the conditional expectation \eqref{eq-conditn-exp} and the solution of BSPDE \eqref{BSPDE}.

\begin{prop}\label{prop-repsn}
For the coefficients $G,f,\sigma,\theta,b$, we assume the same hypothesis of Theorem \ref{thm-hormand}. Suppose further that $G\in L^p(\Omega;C_b)$.  Let $(u,v)\in\cS^2(H^m)\times \cL^2(H^{m-1})$ be the solution of BSPDE \eqref{BSPDE}. Then, we have for all $x\in\bR^d$,
\begin{equation}\label{eq-prop-repsn}
u(t,X_t^{s,x})
=E_{\bar{\sF}_t}\left[ G(X_T^{s,x})
+\int_t^T f(r,X_r^{s,x})\,dr\right]\quad \text{a.s.},\,\, \text{for all }0\leq s\leq t\leq T.
\end{equation}
\end{prop}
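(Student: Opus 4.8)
The plan is to establish the identity \eqref{eq-prop-repsn} in three stages: first for sufficiently smooth data and a smooth solution, via a direct It\^o--Wentzell computation; then by an approximation argument to pass to the general $(f,g=0,G)$ covered by Theorem \ref{thm-hormand}; and finally to upgrade the mode of convergence so that the pointwise-in-$x$ identity holds almost surely for every $x$, using the continuity in $x$ guaranteed by the H\"ormander-type regularity.

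First I would fix $s$ and $x$ and regard $Y_t := u(t, X_t^{s,x})$ as a process on $[s,T]$. Because the coefficients $b,\sigma,\theta$ are random but only $\{\sF_t\}$-adapted, while $B$ is independent of $\{\sF_t\}$, the process $X^{s,x}$ is an It\^o process driven by both $B$ and $W$; the generalized It\^o--Wentzell formula (as in Krylov's work, or Krylov--Rozovskii) applied to $u(t,\cdot)$ along $X^{s,x}$ produces $dY_t$ as the sum of: the $dt$-term $-du(t,\cdot)$ evaluated at $X_t^{s,x}$ (i.e. the negative of the BSPDE drift), the transport/second-order terms coming from the $dr$, $dB$, $dW$ increments of $X$ applied to $u$, the cross-variation between the $dW$-part of $X$ (through $\theta$) and the martingale part $-v^r\,dW^r$ of $u$, and the genuine martingale increments. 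The key algebraic point — and I expect this to be the main obstacle, requiring care with the repeated-index conventions and with the precise meaning of $L_k$, $M_k$, $\tilde b^j$ in the H\"ormander form of \eqref{BSPDE} — is that the BSPDE \eqref{BSPDE} is written in exactly the form that makes these terms cancel: the $\frac12(L_k^2+M_k^2)u$ plus the It\^o correction from $\sigma,\theta$, the $\tilde b^jD_ju$ plus the Stratonovich-to-It\^o correction, and the $M_kv^k$ term plus the $dW$-cross-variation $D_ju\cdot\theta^{jk}$ against $-v^k$ all combine with the $f(t,X_t)\,dt$ so that $dY_t + f(t,X_t^{s,x})\,dt$ is a martingale (the $dB$-martingale from transporting $u$ along the $B$-part of $X$, plus a $dW$-martingale). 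Hence $Y_t + \int_s^t f(r,X_r^{s,x})\,dr$ is an $\{\bar\sF_t\}$-martingale on $[s,T]$, and equating its value at $t$ with the conditional expectation of its value at $T$, where $Y_T = u(T,X_T^{s,x}) = G(X_T^{s,x})$, yields
\[
u(t,X_t^{s,x}) + \int_s^t f(r,X_r^{s,x})\,dr = E_{\bar\sF_t}\Big[ G(X_T^{s,x}) + \int_s^T f(r,X_r^{s,x})\,dr\Big],
\]
and subtracting $\int_s^t f(r,X_r^{s,x})\,dr = E_{\bar\sF_t}[\int_s^t f(r,X_r^{s,x})\,dr]$ (it is $\bar\sF_t$-measurable) gives \eqref{eq-prop-repsn}.

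To make this rigorous I would first take $(f,G)$ smooth enough — say in $\cap_n \cL^2(H^n)$ and $L^2(\Omega;H^n)$ for all $n$ together with the $C_b$-bound on $G$ — so that by Theorem \ref{thm-hormand} the solution $u(t,\cdot)$ lies in $H^n$ for all $n$ on $[0,T-\eps]$, hence is a genuine $C^\infty$ function of $x$ with bounded derivatives locally in time, which legitimizes the It\^o--Wentzell application on $[s,T-\eps]$; then let $\eps\downarrow 0$, using the terminal continuity $u(t)\to G$ in $\cS^2(H^m)$ and the $C_b$-bound to control $u(T-\eps,X_{T-\eps}^{s,x})\to G(X_T^{s,x})$. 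For general data one approximates $(f,G)$ by smooth $(f_n,G_n)$ converging in $\cL^2(H^m)\times L^2(\Omega;H^m)$; by the a priori estimate \eqref{estim-thm} the solutions $u_n\to u$ in $\cS^2(H^m)$, and by standard SDE moment estimates and the $H^m\hookrightarrow$ (local) $L^2$ or, where $m$ is large enough, $C_b$ embedding one passes to the limit in \eqref{eq-prop-repsn} along a subsequence, first as an identity in $L^2(\Omega)$ for a.e.\ $x$. Finally, since under the stated hypotheses $u(t,\cdot)$ is a.s.\ continuous in $x$ and $x\mapsto X_t^{s,x}$ is a.s.\ continuous, both sides of \eqref{eq-prop-repsn} are a.s.\ continuous in $x$, so the a.e.-$x$ identity extends to all $x\in\bR^d$ simultaneously. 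The main delicacy throughout is bookkeeping in the It\^o--Wentzell step — verifying that the particular combination of drift, transport, and cross-variation terms appearing in \eqref{BSPDE} is precisely what is needed for the martingale property — together with ensuring the regularity of $u$ near the terminal time is sufficient to run the argument, which is exactly what the H\"ormander-type estimate \eqref{est-hmd-thm} supplies.
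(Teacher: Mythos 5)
Your proposal follows essentially the same route as the paper: mollify the terminal datum $G$ so that the corresponding solution is smooth, apply the It\^o--Wentzell/It\^o--Kunita formula to $u_N(t,X_t^{s,x})$ to obtain the BSDE representation, take conditional expectations, and pass to the limit. One caution on the limit passage: for general $m\in\bR$ (possibly negative) the convergence $u_n\to u$ in $\cS^2(H^m)$ together with ``$H^m\hookrightarrow$ local $L^2$'' does \emph{not} control the compositions $u_n(t,X_t^{s,x})$, since elements of $H^m$ need not be functions; the step that actually closes the argument is the interior smoothing estimate \eqref{est-hmd-thm} applied to the difference $u_n-u$ (a solution with data $(0,0,G_n-G)$), which yields $\|u_n-u\|_{L^2(\Omega;C([0,t];H^{d+2}))}\leq C\|G_n-G\|_m\to 0$ and hence uniform-in-$x$ convergence via $H^{d+2}\hookrightarrow C_b$ --- this is exactly how the paper proceeds, and it also makes your final ``a.e.\ $x$ to all $x$'' continuity step unnecessary. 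Two further simplifications available to you: under the hypotheses of Theorem \ref{thm-hormand} the source $f$ is already in $\cap_n\cL^2(H^n)$, so only $G$ needs to be mollified; and once $G_N\in\cap_n L^2(\Omega;H^n)$, Theorem \ref{thm-BSPDE} gives $(u_N,v_N)\in\cS^2(H^n)\times\cL^2(H^{n-1})$ for every $n$, so the formula can be applied on all of $[t,T]$ and the $T-\eps$ cutoff with $\eps\downarrow 0$ is not needed.
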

\begin{proof}
In view of the continuity of $X_r^{s,x}$ with respect to $(s,x,r)$, we first check that all the terms involved in relation \eqref{eq-prop-repsn} make sense in view of the H\"ormander-type theorem \ref{thm-hormand}. 
Let $\rho\in C_c^{\infty}(\bR^d)$ be a nonnegative function with the support in the unit ball centered at the origin such that $\int_{\bR^d} \rho(y)\,dy=1$.  Define the convolution:
$$G_N(x)=\int_{\bR^d}G(x-y)\rho\left(Ny\right)N^d\,dy,\quad \text{for }N\in\bN^+.$$
 In view of the smooth approximation of identity, we have $G_N\in \cap_{n\in\bR} L^2(\Omega;H^n)$ for each $N\in\bN^+$, and $G_N$ converges to $G_N$ in both spaces $L^{2}(\Omega;H^m)$ and $L^2(\Omega;C_b)$. Obviously, it holds that $\lim_{N\rightarrow \infty}E\|G_N(X_T^{s,\cdot})-G(X_T^{s,\cdot})\|_{C_b}^2=0$. For each $N$, let $(u_N,v_N)$ be the unique solution of BSPDE \eqref{BSPDE} with $G$ replaced by $G_N$.
For each $t\in[0,T)$, by Theorem \ref{thm-hormand}, we have for any $\eps\in [0,T-t)$,
$$
(u_N,v_N), (u,v)\in \cap_{n\in\bR} L^2(\Omega;C([0,t+\eps];H^n))\times L^2(\Omega;L^2(0,t+\eps;H^{n-1})),$$
and
$$
\|u_N-u\|^2_{L^2(\Omega;C([0,t+\eps];H^n))}
\leq 
C(n)E\|G_N-G\|_m^2\rightarrow 0,\quad \text{as }N\rightarrow\infty,\quad \forall\,n\in\bR,
$$
and in particular, since $H^{d+2}$ is embedded into $C_b$, there holds
$$
E\sup_{r\in[s,t+\eps]}\|(u_N-u)(r,X_r^{s,\cdot})\|_{C_b}^2\leq
C  \|u_N-u\|^2_{L^2(\Omega;C([0,t+\eps];H^{d+2}))}\rightarrow 0,\quad \text{as }N\rightarrow\infty.
$$

On the other hand, by the It\^o-Kunita formula we have for each $N$ and $0\leq s\leq t$,
\begin{align}
u_N(t,X_t^{s,x})
&= G_N(X_T^{s,x})
+\int_t^T f(r,X_r^{s,x})\,dr
-\int_t^T (v_N+Du_N\theta)(r,X_r^{s,x})\,dW_r\nonumber\\
&\quad-\int_t^T Du_N\sigma (r,X_r^{s,x})\,dB_r\quad \text{a.s.},\quad\forall\,x\in\bR^d.\label{BSDE-relatn}
\end{align}
Taking conditional expectations on both sides, we get for every $x\in\bR^d$
\begin{equation}\label{eq-prop-repsn-N}
u_N(t,X_t^{s,x})
=E_{\bar{\sF}_t}\left[ G_N(X_T^{s,x})
+\int_t^T f(r,X_r^{s,x})\,dr\right]\quad \text{a.s.},\quad \text{for all }0\leq s\leq t\leq T.
\end{equation}
Letting $N$ goes to infinity, we prove \eqref{eq-prop-repsn}.
\end{proof}
\begin{rmk}\label{rmk-measurablity}
In Proposition \ref{prop-repsn}, we assume $G\in L^2(\Omega;C_b)$ to make sense of the composition $G(X_T^{t,x})$.   We would also remark that by taking $s=t$ in relation \eqref{eq-prop-repsn}, the function $u(t,x)$ defined by \eqref{eq-conditn-exp} is just $\sF_t$-measurable and that the conditional expectation in \eqref{eq-conditn-exp} is equivalent to the one with respect to the sub-filtration $\{\sF_t\}_{t\geq 0}$, i.e.,
\begin{align*}
u(t,x)=E_{{\sF}_t}\left[
\int_t^Tf(r,X_r^{t,x})\,dr
+G(X_T^{t,x})
\right],\quad (t,x)\in[0,T]\times\bR^d. \label{eq-conditn-exp}
\end{align*}

\end{rmk}


\bibliographystyle{siam}
%

\end{document}